\documentclass[a4paper,oneside,11pt]{article}%
\usepackage{makeidx}
\usepackage[english]{babel}
\usepackage{amsmath}
\usepackage{amsfonts}
\usepackage{amssymb}
\usepackage{stmaryrd}
\usepackage{graphicx}
\usepackage{mathrsfs}
\usepackage[colorlinks,linkcolor=red,anchorcolor=blue,citecolor=blue,urlcolor=blue]{hyperref}
\usepackage[symbol*,ragged]{footmisc}
%\setcounter{MaxMatrixCols}{30}
%TCIDATA{OutputFilter=latex2.dll}
%TCIDATA{Version=5.50.0.2890}
%TCIDATA{LastRevised=Wednesday, August 12, 2015 22:29:11}
%TCIDATA{<META NAME="GraphicsSave" CONTENT="32">}
%TCIDATA{<META NAME="SaveForMode" CONTENT="1">}
%TCIDATA{BibliographyScheme=Manual}
%BeginMSIPreambleData
\providecommand{\U}[1]{\protect\rule{.1in}{.1in}}
%EndMSIPreambleData
\providecommand{\U}[1]{\protect \rule{.1in}{.1in}}

\pagenumbering{arabic}
\setlength{\textwidth}{145mm}
\setlength{\textheight}{225mm}
\headsep=20pt \topmargin=-5mm \oddsidemargin=0.46cm
\evensidemargin=0.46cm \raggedbottom
\newtheorem{theorem}{Theorem}[section]

\newtheorem{lemma}[theorem]{Lemma}

\newtheorem{proposition}[theorem]{Proposition}

\newenvironment{proof}[1][Proof]{\noindent \textbf{#1.} }{\  \rule{0.5em}{0.5em}}
\numberwithin{equation}{section}

\begin{document}

\title{On the Fourth Power Moment of the Error Term for  \\
the Divisor Problem with Congruence Conditions}
%  \author{Min Zhang\footnotemark   \vspace*{-5mm} \\
   %  \small Department of Mathematics, China University of Mining and Technology \vspace*{-5mm} \\
  %  \small  Beijing 100083, P. R. China  }

\author{Jinjiang Li\footnotemark[1]\,\,\,\, \, \& \,\,Min Zhang\footnotemark[2] \vspace*{-4mm} \\
$\textrm{\small Department of Mathematics, China University of Mining and Technology}^{*\,\dag}$
                    \vspace*{-4mm} \\
     \small  Beijing 100083, P. R. China  }

\footnotetext[2]{Corresponding author. \\
    \quad\,\, \textit{ E-mail addresses}: \href{mailto:jinjiang.li.math@gmail.com}{jinjiang.li.math@gmail.com} (J. Li),
     \href{mailto:min.zhang.math@gmail.com}{min.zhang.math@gmail.com} (M. Zhang).    }

\date{}
\maketitle

%\begin{center}
  %\small   Department of Mathematics,
  %  China University of Mining and Technology,
  %  Beijing 100083, P. R. China \\
%\end{center}

{\textbf{Abstract}}: Let $d(n;\ell_1,M_1,\ell_2,M_2)$ denote the number of factorizations $n=n_1n_2$, where each of the factors $n_i\in\mathbb{N}$ belongs to a prescribed congruence class $\ell_i\bmod M_i\,(i=1,2)$. Let
$\Delta(x;\ell_1,M_1,\ell_2,M_2)$ be the error term of the asymptotic formula of $\sum\limits_{n\leqslant x}d(n;\ell_1,M_1,\ell_2,M_2)$. In this
paper, we establish an asymptotic formula of the fourth power moment of $\Delta(M_1M_2x;\ell_1,M_1,\ell_2,M_2)$ and prove that
\begin{equation*}
  \int_1^T\Delta^4(M_1M_2x;\ell_1,M_1,\ell_2,M_2)\mathrm{d}x=\frac{1}{32\pi^4}C_4\Big(\frac{\ell_1}{M_1},\frac{\ell_2}{M_2}\Big)
  T^2+O(T^{2-\vartheta_4+\varepsilon}),
\end{equation*}
with $\vartheta_4=1/8$, which improves the previous value $\theta_4=3/28$ of K. Liu \cite{Liu-Kui}.

{\textbf{Keywords}}: Divisor problem; higher--power moment; asymptotic formula.

{\textbf{MR(2010) Subject Classification}}: 11N37, 11M06.

\section{Introduction and main result}
For fixed integers $M_1,M_2$, the divisor function with congruence conditions is defined by
\begin{equation*}
 d(n;\ell_1,M_1,\ell_2,M_2):=\#\big\{(n_1,n_2)\in\mathbb{N}^2:n_1n_2=n,\,n_i\equiv\ell_i(\bmod M_i),\,i=1,2\big\},
\end{equation*}
where $1\leqslant\ell_1\leqslant M_1$ and $1\leqslant\ell_2\leqslant M_2$. Its generating function is
\begin{equation*}
  \sum_{n=1}^\infty\frac{d(n;\ell_1,M_1,\ell_2,M_2)}{n^s}=\zeta(s,\lambda_1)\zeta(s,\lambda_2)(M_1M_2)^{-s},
\end{equation*}
where $\Re s>1,\lambda_i=\frac{\ell_i}{M_i},i=1,2$, and $\zeta(s,\lambda)$ denotes the Hurwitz zeta--function, which is defined by
\begin{equation*}
  \zeta(s,\alpha):=\sum_{n=0}^{\infty}\frac{1}{(n+\alpha)^s},\qquad s=\sigma+it,\quad \sigma>1,
\end{equation*}
for $0<\alpha\leqslant1$. The ``divisor problem with congruence conditions'' is to study the error term
\begin{align*}
  \Delta(x;\ell_1,M_1,\ell_2,M_2):= & \sum_{n\leqslant x}d(n;\ell_1,M_1,\ell_2,M_2)
                                        \nonumber \\
  & -\sum_{s_0=0,1}Res_{s=s_0}\bigg(\frac{\zeta(s,\lambda_1)\zeta(s,\lambda_2)}{s}\bigg(\frac{x}{M_1M_2}\bigg)^s\bigg).
\end{align*}
It is conjectured that, for every $\varepsilon>0$, there holds
\begin{equation*}
  \Delta(M_1M_2x;\ell_1,M_1,\ell_2,M_2)\ll x^{1/4+\varepsilon}
\end{equation*}
uniformly in $1\leqslant\ell_1\leqslant M_1$ and $1\leqslant\ell_2\leqslant M_2$. The best result up to date is
\begin{equation*}
  \Delta(M_1M_2x;\ell_1,M_1,\ell_2,M_2)\ll x^{131/416}(\log x)^{26947/8320},
\end{equation*}
uniformly again in $1\leqslant\ell_1\leqslant M_1$ and $1\leqslant\ell_2\leqslant M_2$, which follows from Richert's work \cite{Richert}
and Huxley's estimates \cite{Huxley}. In the other direction, Nowak proved in \cite{Nowak-1989} that
\begin{equation*}
  \Delta(Mx;\ell,M,1,1)\ll \Omega_*\Big((x\log x)^{1/4}(\log\log\log x)^{-1/4}\Big)
\end{equation*}
with $*=-$ if $\lambda_1^*<\frac{\ell}{M}<\lambda_2^*$, and $*=+$ else, where $\lambda_1^*=0.03728310\cdots,\lambda_2^*=0.67181895\cdots$ are the zeros of
\begin{equation*}
  -\log(2\sin (\pi\lambda))+\pi\lambda-\frac{\pi}{2}=0
\end{equation*}
in the unit interval. This result holds uniformly in $1\leqslant\ell \leqslant M\leqslant x$, as long as $\frac{\ell}{M}$ is bounded away
from $\lambda_1^*,\lambda_2^*$. Another result follows from Theorem 2 of Nowak \cite{Nowak-1990}, i.e.
\begin{equation*}
  \Delta(x;\ell,M,1,1)=\Omega_\pm\Big((x\log x)^{1/4}(\log\log\log x)^{-1/2}\Big)
\end{equation*}
as $x\to\infty$, where $\ell,M$ are fixed integers satisfying
\begin{equation*}
  0<\frac{\ell}{M}<\frac{1}{6} \qquad \textrm{or} \qquad \frac{1}{2}<\frac{\ell}{M}<\frac{5}{6}.
\end{equation*}
In 1995, M\"{u}ller and Nowak \cite{Muller-Nowak} studied the mean value of $\Delta(M_1M_2x;\ell_1,M_1,\ell_2,M_2)$. They proved that, for a large real parameter $T$, there hold
\begin{equation*}
 \int_1^T\Delta(M_1M_2x;\ell_1,M_1,\ell_2,M_2)\mathrm{d}x\ll T^{3/4}
\end{equation*}
and
\begin{equation*}
 \int_1^T\Delta^2(M_1M_2x;\ell_1,M_1,\ell_2,M_2)\mathrm{d}x=\mathfrak{S}_2\bigg(\frac{\ell_1}{M_1},\frac{\ell_2}{M_2}\bigg)T^{3/2}+o(T^{3/2})
\end{equation*}
(even with a good error term), uniformly in $1\leqslant\ell_1\leqslant M_1,1\leqslant\ell_2\leqslant M_2$. For the third power moment, they proved
that, for any $\varepsilon>0$, there holds
\begin{equation*}
 \int_1^T\Delta^3(M_1M_2x;\ell_1,M_1,\ell_2,M_2)\mathrm{d}x=\mathfrak{S}_3\bigg(\frac{\ell_1}{M_1},\frac{\ell_2}{M_2}\bigg)T^{7/4}+o(T^{47/28+\varepsilon})
\end{equation*}
uniformly in $1\leqslant\ell_1\leqslant M_1,1\leqslant\ell_2\leqslant M_2$. In 2011, Liu \cite{Liu-Kui} consider the higher--power moments of
$\Delta(M_1M_2x;\ell_1,M_1,\ell_2,M_2)$. In his paper, Liu refer to a unified approach, which is originated from Zhai \cite{Zhai-2}, to show
the asymptotic formula
\begin{equation}\label{unified-Delta-asymptotic}
  \int_1^T\Delta^k(M_1M_2x;\ell_1,M_1,\ell_2,M_2)\mathrm{d}x=\mathcal{C}_k
  T^{1+k/4}+O(T^{1+k/4-\vartheta_k+\varepsilon}).
\end{equation}
holds for $3\leqslant k\leqslant9$, where $\mathcal{C}_k$ and $0<\vartheta_k<1$ are explicit constants. For the case $k=4$, Liu \cite{Liu-Kui} also
gives a separate conclusion of the value $\vartheta_4=3/28$, which is better than that in the unified asymptotic formula
(\ref{unified-Delta-asymptotic}).

The aim of this paper is to improve the value of $\vartheta_4=3/28$, which is achieved
by Liu \cite{Liu-Kui}. The main result is the following theorem.

\begin{theorem}\label{theorem-1}
 We have
\begin{equation*}
   \int_1^T\Delta^4(M_1M_2x;\ell_1,M_1,\ell_2,M_2)\mathrm{d}x=\frac{1}{32\pi^4}C_4\bigg(\frac{\ell_1}{M_1},\frac{\ell_2}{M_2}\bigg)
  T^2+O(T^{2-\vartheta_4+\varepsilon}),
\end{equation*}
with $\vartheta_4=1/8$, where $C_4(\frac{\ell_1}{M_1},\frac{\ell_2}{M_2})$ is defined by (\ref{C_4(l_1/M_1,l_2/M_2)-def}).
\end{theorem}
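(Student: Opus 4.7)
The plan is to adapt the unified framework of Zhai~\cite{Zhai-2} used by Liu~\cite{Liu-Kui}, but to replace the fourth--power tail estimate by a sharper one. The starting point is a truncated Voronoi--type identity for $\Delta(M_1M_2x;\ell_1,M_1,\ell_2,M_2)$, obtainable by inserting the factorisation $\zeta(s,\lambda_1)\zeta(s,\lambda_2)(M_1M_2)^{-s}$ into a Perron integral and shifting the contour; this yields a decomposition
\begin{equation*}
\Delta(M_1M_2x;\ell_1,M_1,\ell_2,M_2) = R(x,N) + E(x,N),
\end{equation*}
where
\begin{equation*}
R(x,N) = \frac{x^{1/4}}{\pi\sqrt{2}\,(M_1M_2)^{1/4}}\sum_{n\leqslant N}\frac{d(n;\ell_1,M_1,\ell_2,M_2)}{n^{3/4}}\cos\!\left(4\pi\sqrt{\tfrac{nx}{M_1M_2}}-\tfrac{\pi}{4}\right),
\end{equation*}
the truncation level $N=T^{\beta}$ is a parameter to be optimised, and $E(x,N)$ collects the tail of the Voronoi series together with harmless lower--order residues.

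Next, I would expand
\begin{equation*}
\Delta^4 = R^4 + 4R^3 E + 6R^2 E^2 + 4R E^3 + E^4,
\end{equation*}
integrate over $[1,T]$, and handle the five pieces separately. The cross integrals $\int_1^T R^j E^{4-j}\mathrm{d}x$ with $1\leqslant j\leqslant 3$ are controlled by H\"older's inequality, combining the second--moment asymptotic of M\"uller--Nowak for $\Delta$ with the fourth--moment tail bound described below. For $\int_1^T R(x,N)^4\mathrm{d}x$ one expands the fourth power into a quadruple sum and rewrites each product of four cosines as a linear combination of cosines with phases $\eta_1\sqrt{n_1}+\eta_2\sqrt{n_2}+\eta_3\sqrt{n_3}+\eta_4\sqrt{n_4}$ for $\eta_i\in\{\pm1\}$. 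Diagonal quadruples, for which this phase vanishes---equivalently, $\sqrt{n_1}+\sqrt{n_2}=\sqrt{n_3}+\sqrt{n_4}$ after suitable grouping---produce exactly $(32\pi^4)^{-1}C_4(\ell_1/M_1,\ell_2/M_2)\,T^2$; the off--diagonal quadruples yield one--dimensional exponential sums in $x$ that can be estimated by first-- and second--derivative tests or exponent pair techniques, saving a positive power of $T$ so long as $N$ is not chosen too large.

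The pivotal new ingredient, where the exponent $\vartheta_4=1/8$ improves upon Liu's $3/28$, is a tail fourth--moment bound of the form
\begin{equation*}
\int_1^T E(x,N)^4\mathrm{d}x \ll T^{2-1/8+\varepsilon}
\end{equation*}
for the optimal choice of $N$. I would derive this via dyadic decomposition of the tail, reinterpreting each dyadic block as a short Dirichlet polynomial in the variable $\sqrt{x}$, and applying a Heath--Brown style large--value/mean--value theorem together with the Richert--Huxley pointwise bound $\Delta\ll x^{131/416+\varepsilon}$ for $L^\infty$ control. Balancing this tail bound against the off--diagonal contribution to $\int R^4\mathrm{d}x$ then fixes $\beta$ and produces the claimed error term.

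The principal obstacle is precisely this fourth--power tail estimate: pushing the exponent from $3/28$ to $1/8$ requires extracting an additional saving beyond what a direct application of the large--value theorem gives, most plausibly by exploiting the multiplicative structure of $d(n;\ell_1,M_1,\ell_2,M_2)$ through its Hurwitz--zeta factorisation, or by a more refined dissection of the dyadic ranges in which sharper classical fourth--moment estimates can be invoked. The remaining steps---the main term extraction and the off--diagonal exponential sum bounds---parallel those of the classical Dirichlet divisor problem and should require only notational modifications to accommodate the congruence parameters $(\ell_1,M_1,\ell_2,M_2)$.
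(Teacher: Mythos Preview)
Your proposal misidentifies where the improvement from $3/28$ to $1/8$ actually lives. You locate the ``pivotal new ingredient'' in a sharper tail bound $\int_1^T E(x,N)^4\,\mathrm{d}x\ll T^{2-1/8+\varepsilon}$, to be obtained via large--value theorems and the Richert--Huxley pointwise estimate, and you dismiss the off--diagonal part of $\int R^4$ as routine, paralleling the classical divisor problem ``with only notational modifications''. In the paper the situation is exactly the reverse. The tail contributions (here $\mathscr{R}_1,\mathscr{R}_2,\mathscr{G}$) are handled by simply quoting Liu's existing mean--square and eighth--moment bounds together with an elementary $L^4$ bound for $\mathscr{G}$; no large--value machinery enters. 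The entire gain over Liu comes from the off--diagonal analysis of $\int \mathscr{R}_0^4$: after the first--derivative test one must bound sums of the shape
\[
\sum_{\substack{n_i\sim N_i\\ \eta:=\sqrt{n_1}+\sqrt{n_2}-\sqrt{n_3}-\sqrt{n_4}\neq 0}} (n_1n_2n_3n_4)^{-3/4}\min\bigl(T^2,\,T^{3/2}|\eta|^{-1}\bigr),
\]
and the sharpening is achieved by combining two lattice--point counts of Zhai (Lemmas~\ref{Zhai-lemma-5} and~\ref{Zhai-lemma-3}) and taking the minimum, together with Kong's spacing lemma $|\eta|\gg (n_1n_2n_3n_4)^{-1/2}\max(n_i)^{-3/2}$ in the small--$|\eta|$ regime. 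This last ingredient is what replaces the weaker spacing bound in Zhai~\cite{Zhai-3} and is the source of the exponent $1/8$.

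A second, more technical point: your truncated Voronoi expansion with coefficients $d(n;\ell_1,M_1,\ell_2,M_2)$ and a single phase $4\pi\sqrt{nx/(M_1M_2)}-\pi/4$ is not the correct shape here. Because the generating function is a product of Hurwitz zeta functions, the Voronoi--type expansion (built in the paper from the $\psi$--expansion and van der Corput's $B$--process) carries phases that depend on the individual factors $h,r$ of $n$ through $h\ell_2/M_2+r\ell_1/M_1$, not merely on $n$. This does not affect the off--diagonal counting, which depends only on $\sqrt{n_i}$, but it does affect the form of the constant $C_4$ and would need to be corrected before the main term could be identified.
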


We will use the method, which is developed by Zhai \cite{Zhai-3}, to establish Theorem \ref{theorem-1}, but with Kong's Lemma (See Lemma \ref{kong-lamma}) instead of Lemma 2 in Zhai \cite{Zhai-3} by more careful analysis. This method can be applied to a series of fourth--power moment estimates , and
obtained  better estimates.

\section{Notation}
Throughout this paper, $\|x\|$ denotes the distance from $x$ to the nearest integer, i.e. $\|x\|=\min_{n\in\mathbb{Z}}|x-n|$. $[x]$ denotes the integer
part of $x$; $\psi(t)=t-[t]-\frac{1}{2}$; $e(t)=e^{2\pi it}$.
$n\sim N$ means $N<n\leqslant2N$; $n\asymp N$ means $C_1N\leqslant n\leqslant C_2N$ with positive constants $C_1,C_2$ satisfying $C_1<C_2$.
$f(x)=\Omega_+(g(x))$ means that there exists a suitable constant $C>0$ such that $f(x)>Cg(x)$ holds for a sequence $x=x_n$ such that
$\lim_{n\to\infty}x_n=\infty$; $f(x)=\Omega_-(g(x))$ means that there exists a suitable constant $C>0$ such that $f(x)<-Cg(x)$ holds for a sequence
$x=x_n$ such that $\lim_{n\to\infty}x_n=\infty$; $f(x)=\Omega(g(x))$ means that $|f(x)|=\Omega_\pm(g(x))$.
$\varepsilon$ always denotes an arbitrary small positive constant, which may not be the same at different occurrences. $d(n)$ denotes the Dirichlet
divisor  function, and we shall use the well--know estimate $d(n)\ll n^\varepsilon$. Suppose $f:\mathbb{N}\to\mathbb{R}$ is any arithmetic function
satisfying $f(n)\ll n^\varepsilon$. Define
\begin{equation*}
   \sideset{}{'}\sum_{\alpha\leqslant n\leqslant\beta}f(n)=\left\{
    \begin{array}{ll}
       \displaystyle\sum_{\alpha<n<\beta}f(n), & \textrm{if $\alpha\not\in\mathbb{Z},\beta\not\in\mathbb{Z}$}, \\
       \displaystyle\frac{1}{2}f(\alpha)+\displaystyle\sum_{\alpha<n<\beta}f(n), & \textrm{if $\alpha\in\mathbb{Z},\beta\not\in\mathbb{Z}$}, \\
       \displaystyle\displaystyle\sum_{\alpha<n<\beta}f(n)+\frac{1}{2}f(\beta), & \textrm{if $\alpha\not\in\mathbb{Z},\beta\in\mathbb{Z}$}, \\
       \displaystyle\frac{1}{2}f(\alpha)+\displaystyle\sum_{\alpha<n<\beta}f(n)+\frac{1}{2}f(\beta), & \textrm{if $\alpha\in\mathbb{Z},\beta\in\mathbb{Z}$},
    \end{array}
   \right.
\end{equation*}
and, for $k\geqslant2$, define
\begin{equation}\label{s_{k;v}-def}
  s_{k;v}:=s_{k;v}(f)=\sum_{\substack{n_1,\cdots,n_v,n_{v+1},\cdots,n_k\in\mathbb{N}^*\\ \sqrt{n_1}+\cdots+\sqrt{n_v}=\sqrt{n_{v+1}}+\cdots+\sqrt{n_k}}}
  \frac{f(n_1)f(n_2)\cdots f(n_k)}{(n_1n_2\cdots n_k)^{3/4}}, \quad 1\leqslant v<k.
\end{equation}
We shall use $s_{k;v}$ or $s_{k;v}(f)$ to denote both of the series (\ref{s_{k;v}-def}) and its value. Suppose $y>1$ is large parameter, and we define
\begin{equation*}
  s_{k;v}(y):=s_{k;v}(f;y)=\sum_{\substack{n_1,\cdots,n_v,n_{v+1},\cdots,n_k\leqslant y\\ \sqrt{n_1}+\cdots+\sqrt{n_v}=\sqrt{n_{v+1}}+\cdots+\sqrt{n_k}}}
  \frac{f(n_1)f(n_2)\cdots f(n_k)}{(n_1n_2\cdots n_k)^{3/4}}, \quad 1\leqslant v<k.
\end{equation*}

\section{Preliminary Lemmas}

\begin{lemma}\label{Heath-Brown-psi(u)}
   Let $H\geqslant2$ be any real number. Then we have
\begin{equation*}
   \psi(u)=-\sum_{1\leqslant|h|\leqslant H}\frac{e(hu)}{2\pi ih}+O\bigg(\min\bigg(1,\frac{1}{H\|u\|}\bigg)\bigg).
\end{equation*}
\end{lemma}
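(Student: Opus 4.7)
The plan is to derive this truncated identity from the pointwise Fourier expansion of the sawtooth function together with an Abel-summation bound for the tail. First I would pair the terms $h$ and $-h$ on the right-hand side to obtain
\begin{equation*}
-\sum_{1\leqslant|h|\leqslant H}\frac{e(hu)}{2\pi ih}=-\sum_{h=1}^{H}\frac{\sin(2\pi hu)}{\pi h},
\end{equation*}
and then invoke the classical Fourier expansion $\psi(u)=-\sum_{h=1}^{\infty}\sin(2\pi hu)/(\pi h)$, valid for every $u\notin\mathbb{Z}$. The content of the lemma thus reduces to showing that the tail
\begin{equation*}
R_H(u):=\sum_{h>H}\frac{\sin(2\pi hu)}{\pi h}
\end{equation*}
satisfies $R_H(u)\ll\min(1,1/(H\|u\|))$.

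For the bound $R_H(u)\ll 1/(H\|u\|)$, I would apply summation by parts. The partial sums $T_N(u):=\sum_{H<h\leqslant N}e(hu)$ are controlled by the standard geometric-series identity, which gives $|T_N(u)|\leqslant|1-e(u)|^{-1}\leqslant(2\|u\|)^{-1}$ uniformly in $N$. Abel's lemma then yields
\begin{equation*}
\bigg|\sum_{H<h\leqslant N}\frac{e(hu)}{h}\bigg|\leqslant\frac{|T_N(u)|}{N}+\sum_{H<h<N}\frac{|T_h(u)|}{h(h+1)}\ll\frac{1}{H\|u\|},
\end{equation*}
and letting $N\to\infty$ and taking the imaginary part produces the desired tail estimate.

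The trivial bound $R_H(u)\ll 1$ is required when $\|u\|$ is very small, where the above partial-summation estimate is wasteful. Since $|\psi(u)|\leqslant 1/2$, it suffices to bound the truncated sum directly. For $\|u\|\leqslant 1/(2H)$, all multiples $h\|u\|$ with $h\leqslant H$ lie in $[0,1/2)$, so $|\sin(2\pi hu)|\leqslant 2\pi h\|u\|$ and $|\sum_{h=1}^{H}\sin(2\pi hu)/(\pi h)|\leqslant 2H\|u\|\leqslant 1$, while the intermediate range $1/(2H)<\|u\|<1/H$ is controlled by the bound $1/(H\|u\|)<2$ already established. Combining the two regimes yields the stated estimate. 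The only delicate point is the exceptional set $u\in\mathbb{Z}$, where the Fourier series fails pointwise: there, however, the truncated exponential sum vanishes identically by oddness in $h$ and $\psi$ is bounded, so the inequality remains valid with $\min(1,1/(H\|u\|))$ interpreted as $1$. I expect no serious obstacle in the argument; the main technical point is simply performing the clean case-split at $\|u\|\asymp 1/H$.
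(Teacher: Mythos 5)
Your proof is correct: pairing $\pm h$ to reduce to the sine series, bounding the tail by partial summation against the geometric sum $|T_N(u)|\ll\|u\|^{-1}$, and handling the range $\|u\|\ll H^{-1}$ (and $u\in\mathbb{Z}$) by the trivial bound is exactly the standard derivation of this truncated expansion. The paper itself offers no proof, only the citation to Heath--Brown, and the argument you supply is the one that reference relies on, so there is nothing to compare beyond noting that your write-up is a complete and accurate substitute for the citation.
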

\begin{proof}
  See pp. 245 of Heath--Brown \cite{Heath-Brown-1}.
\end{proof}

\begin{lemma}\label{Min-lemma}
Suppose $A_1,\cdots,A_5$ are absolute positive constants. Let $f(x)$ and $g(x)$ are algebraic functions on $[a,b]$, which satisfy
\begin{equation*}
  \frac{A_1}{R}\leqslant|f''(x)|\leqslant\frac{A_2}{R}, \quad |f'''(x)|\leqslant\frac{A_3}{RU},\quad U\geqslant1,
\end{equation*}
\begin{equation*}
|g(x)|\leqslant A_4G, \quad |g'(x)|\leqslant\frac{A_5G}{U_1},\quad U_1\geqslant1.
\end{equation*}
Suppose $[\alpha,\beta]$ is the image of $[a,b]$ under the mapping $y=f'(x)$, then
\begin{align*}
       \sum_{a<n\leqslant b}g(n)e\big(f(n)\big)
  =& e^{\frac{\pi i}{4}}\sum_{\alpha\leqslant \nu\leqslant \beta}b_\nu\frac{g(n_\nu)}{\sqrt{f''(n_\nu)}}e\big(f(n_\nu)-\nu n_\nu\big)   \\
  & +O\big(G\log(\beta-\alpha+2)+G(b-a+R)(U^{-1}+U_1^{-1})\big)  \\
  & +O\bigg(G\min\bigg(\sqrt{R},\,\,\max\bigg(\frac{1}{\langle\alpha\rangle},\,\frac{1}{\langle\beta\rangle}\bigg)\bigg)\bigg),
\end{align*}
where $n\nu$ is the solution of $f'(n)=\nu$,
\begin{align*}
  \langle t\rangle= &\left\{
    \begin{array}{cl}
       \displaystyle\|t\|, & \textrm{if $t\not\in\mathbb{Z}$},  \\
       \displaystyle\beta-\alpha, & \textrm{if $t\in\mathbb{Z}$},
    \end{array}
        \right.    \\
  b_\nu=& \left\{
   \begin{array}{cl}
      \displaystyle 1, & \textrm{if $\alpha<\nu<\beta$ or $\alpha\not\in\mathbb{Z},\beta\not\in\mathbb{Z}$},  \\
       \displaystyle\frac{1}{2}, & \textrm{if $u=\alpha\in\mathbb{Z}$ or $u=\beta\in\mathbb{Z}$ },
    \end{array}
        \right.      \\
\sqrt{f''}=& \left\{
   \begin{array}{cl}
       \sqrt{f''}, & \textrm{if $f''>0$},  \\
       i\sqrt{|f''|}, & \textrm{if $f''<0$ }.
    \end{array}
  \right.
\end{align*}

\end{lemma}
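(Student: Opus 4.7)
The plan is to combine a truncated Poisson summation with a quantitative saddle--point analysis of the resulting oscillatory integrals. This is the standard route for van der Corput--type transformation formulas, and under the present smoothness hypotheses on $f$ and $g$ one can make every constant explicit.

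The first step is to rewrite the sum as an integral transform. Writing $\sum^{\prime}_{a<n\leqslant b} g(n) e(f(n))$ via Poisson summation (or, equivalently, by inserting the Fourier series of $\psi$ from Lemma \ref{Heath-Brown-psi(u)} into the Euler--Maclaurin formula), one obtains
\begin{equation*}
   \sideset{}{'}\sum_{a<n\leqslant b} g(n) e(f(n)) = \sum_{\nu\in\mathbb{Z}} I_\nu, \qquad I_\nu := \int_a^b g(x) e\bigl(f(x)-\nu x\bigr)\, \mathrm{d}x,
\end{equation*}
with boundary contributions absorbed into the half--weights $b_\nu$. Since $f''$ has constant sign with $|f''|\asymp R^{-1}$, the map $x\mapsto f'(x)$ is a homeomorphism of $[a,b]$ onto $[\alpha,\beta]$, so the integers $\nu$ split naturally into the ``stationary'' ones with $\nu\in[\alpha,\beta]$ and the ``non--stationary'' ones with $\nu\notin[\alpha,\beta]$.

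For the non--stationary integrals I would integrate by parts, using $g(x)$ as amplitude and $f(x)-\nu x$ as phase. Since $|f'(x)-\nu|$ is bounded below by the distance from $\nu$ to $[\alpha,\beta]$ and $|g'|\ll G/U_1$, summing over $\nu\notin[\alpha,\beta]$ yields the logarithmic error $O(G\log(\beta-\alpha+2))$ together with an endpoint contribution controlled by $\langle\alpha\rangle^{-1}$ and $\langle\beta\rangle^{-1}$. For each stationary $\nu$ there is a unique saddle point $n_\nu\in[a,b]$ with $f'(n_\nu)=\nu$. Taylor expansion of the phase to second order at $n_\nu$, replacement of $g(x)$ by $g(n_\nu)$ at cost $O(GU_1^{-1}|x-n_\nu|)$, and evaluation of the Gaussian integral via the Fresnel formula $\int_{-\infty}^{\infty}e\bigl(\tfrac12 f''(n_\nu)t^2\bigr)\,\mathrm{d}t=e^{\pi i/4}/\sqrt{f''(n_\nu)}$ (with the sign convention on $\sqrt{f''}$ stated in the lemma) yield the main term $e^{\pi i/4} b_\nu g(n_\nu) e(f(n_\nu)-\nu n_\nu)/\sqrt{f''(n_\nu)}$. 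The cubic remainder in the Taylor expansion, of size $|f'''|\ll 1/(RU)$, contributes the error $O(G(b-a+R)U^{-1})$, while the variation of $g$ contributes the companion error $O(G(b-a+R)U_1^{-1})$.

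The main obstacle will be achieving uniformity of the saddle--point approximation when $\nu$ lies close to an endpoint of $[\alpha,\beta]$: there the saddle point $n_\nu$ is near $a$ or $b$ and one side of the Gaussian is truncated before its tails have decayed, so the clean stationary--phase formula breaks down. In this transitional regime I would compare two bounds: the trivial bound $O(G\sqrt{R})$ on the integral over a window of length $O(\sqrt{R})$ (the natural width of the Gaussian), and the integration--by--parts bound $O(G/|f'(x)-\nu|)$ coming from treating such $\nu$ as almost non--stationary. Taking the smaller of the two produces exactly the last remainder $O\bigl(G\min(\sqrt{R},\max(\langle\alpha\rangle^{-1},\langle\beta\rangle^{-1}))\bigr)$ in the statement. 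Assembling the stationary main term over $\nu\in[\alpha,\beta]$ together with the accumulated error contributions then yields the formula claimed in the lemma.
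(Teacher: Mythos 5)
Your outline is the standard proof of the van der Corput B--process (truncated Poisson summation, integration by parts off the stationary range, stationary--phase evaluation at the saddle points $n_\nu$, and a separate treatment of the transitional $\nu$ near $\alpha,\beta$ giving the $\min\bigl(\sqrt{R},\max(\langle\alpha\rangle^{-1},\langle\beta\rangle^{-1})\bigr)$ term), and it accounts correctly for each error term in the statement. The paper does not prove this lemma at all --- it only cites Min and Karatsuba--Voronin --- and your sketch is essentially the argument given in those references, so there is nothing to fault beyond the fact that it is a sketch rather than a fully quantitative derivation.
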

\begin{proof}
  See Theorem 2.2 in Chapter 2 in Min \cite{Min-si-he}, also see Theorem 1 in Chapter III in Karatsuba and Voronin \cite{Karatsuba-Voronin}.
\end{proof}

\begin{lemma}\label{Tsang-lemma-1}
If $g(x)$ and $h(x)$ are continuous real--valued functions of $x$ and $g(x)$ is monotonic, then
\begin{equation*}
 \int_a^bg(x)h(x)\mathrm{d}x\ll \bigg(\max_{a\leqslant x\leqslant b}|g(x)|\bigg)\bigg
  (\max_{a\leqslant u<v\leqslant b}\bigg|\int_u^vh(x)\mathrm{d}x\bigg|\bigg).
\end{equation*}
\end{lemma}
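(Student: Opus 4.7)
The plan is to recognize this lemma as a quantitative version of the classical second mean value theorem for integrals. Set $H(x):=\int_a^x h(t)\,dt$, which is continuous on $[a,b]$ since $h$ is continuous, and observe that by choosing $u=a$, $v=x$ on the right-hand side,
\[
\max_{a\leqslant x\leqslant b}|H(x)|\leqslant \max_{a\leqslant u<v\leqslant b}\left|\int_u^v h(t)\,\mathrm{d}t\right|=:\mathcal{I}.
\]
Abbreviate $G:=\max_{a\leqslant x\leqslant b}|g(x)|$. The goal is then to show that $\int_a^b g h\,\mathrm{d}x\ll G\mathcal{I}$.

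First I would appeal to Riemann--Stieltjes integration by parts, which is legitimate because $g$ is continuous and monotonic (hence of bounded variation) while $H$ is continuous. This yields
\[
\int_a^b g(x)h(x)\,\mathrm{d}x=\int_a^b g(x)\,\mathrm{d}H(x)=g(b)H(b)-\int_a^b H(x)\,\mathrm{d}g(x).
\]
The boundary term is trivially at most $G\mathcal{I}$. For the Stieltjes integral, monotonicity of $g$ makes $\mathrm{d}g$ a signed measure of a single sign whose total mass is $|g(b)-g(a)|\leqslant 2G$; pulling $|H|$ out by its maximum gives
\[
\left|\int_a^b H(x)\,\mathrm{d}g(x)\right|\leqslant \max_{[a,b]}|H|\cdot|g(b)-g(a)|\leqslant 2G\mathcal{I}.
\]
Summing the two contributions proves the claim with implied constant $3$.

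There is essentially no analytic obstacle here; the only subtlety is purely formal and concerns the legitimacy of Stieltjes integration by parts for a monotonic $g$ assumed merely continuous (not differentiable). An alternative that bypasses any Stieltjes machinery is Bonnet's form of the second mean value theorem: since $g$ is monotonic, there exists some $\xi\in[a,b]$ such that
\[
\int_a^b g(x)h(x)\,\mathrm{d}x=g(a)\int_a^\xi h(x)\,\mathrm{d}x+g(b)\int_\xi^b h(x)\,\mathrm{d}x,
\]
and taking absolute values produces the stated bound directly with implied constant $2$. Either route delivers the lemma without further input; the content of the statement is really just the uniform-in-$(u,v)$ packaging of the second mean value principle built into the right-hand side.
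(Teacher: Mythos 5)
Your proof is correct. The paper gives no argument of its own here --- it simply cites Lemma 1 of Tsang's paper --- and Tsang's lemma is exactly the second mean value theorem packaged in this uniform form, so your Bonnet-type argument (or the equivalent Riemann--Stieltjes integration by parts, both of which you execute correctly) is precisely the standard proof being invoked.
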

\begin{proof}
  See Lemma 1 of Tsang \cite{Tsang}.
\end{proof}

\begin{lemma}\label{yijie}
 Suppose $A,B\in\mathbb{R},\,A\not=0$. Then for any $\alpha\in\mathbb{R}$, there holds
 \begin{equation*}
     \int_T^{2T} t^\alpha \cos\big(A\sqrt{t}+B\big)\mathrm{d}t\ll T^{1/2+\alpha}|A|^{-1}.
 \end{equation*}
\end{lemma}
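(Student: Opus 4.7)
The plan is to reduce the oscillatory integral on $[T,2T]$ to an integral against $\cos(Au+B)$ with a monotonic amplitude, so that the $1/|A|$ saving from the oscillation of $\cos(Au+B)$ is exposed cleanly. Concretely, the first step would be the substitution $u = \sqrt{t}$, which converts
\begin{equation*}
\int_T^{2T} t^\alpha \cos\bigl(A\sqrt{t}+B\bigr)\,\mathrm{d}t = 2\int_{\sqrt{T}}^{\sqrt{2T}} u^{2\alpha+1}\cos(Au+B)\,\mathrm{d}u.
\end{equation*}

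Next, I would apply Lemma \ref{Tsang-lemma-1} to the right-hand side with $g(u) = u^{2\alpha+1}$ and $h(u) = \cos(Au+B)$. Since $u > 0$ on the interval of integration, $g$ is monotonic (increasing, decreasing, or constant according as $2\alpha+1$ is positive, negative or zero), and trivially
\begin{equation*}
\max_{\sqrt{T}\leqslant u\leqslant \sqrt{2T}} |g(u)| \ll T^{\alpha+1/2}.
\end{equation*}
For the oscillating factor, an elementary antiderivative gives
\begin{equation*}
\Bigl|\int_u^v \cos(Aw+B)\,\mathrm{d}w\Bigr| = \Bigl|\frac{\sin(Av+B)-\sin(Au+B)}{A}\Bigr| \leqslant \frac{2}{|A|},
\end{equation*}
uniformly in $\sqrt{T}\leqslant u < v\leqslant \sqrt{2T}$. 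Combining the two factors via Lemma \ref{Tsang-lemma-1} yields the claimed bound $\ll T^{1/2+\alpha}|A|^{-1}$.

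There really is no serious obstacle here; the lemma is a standard technical estimate used to handle integrals of truncated Voronoi-type series, where the cosine argument $A\sqrt{t}+B$ arises naturally. If one preferred not to invoke Lemma \ref{Tsang-lemma-1}, the same bound follows from integration by parts after writing $\cos(A\sqrt{t}+B) = (2\sqrt{t}/A)\,\tfrac{d}{dt}\sin(A\sqrt{t}+B)$: the boundary term is $O(T^{\alpha+1/2}/|A|)$ and the remaining integral, now with amplitude $t^{\alpha-1/2}$ of length $T$, contributes the same order. Either route produces the stated estimate with no dependence on $B$ and uniform in $\alpha$.
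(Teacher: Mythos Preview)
Your proof is correct and follows essentially the same approach as the paper, which simply states that the lemma ``follows from Lemma \ref{Tsang-lemma-1} easily'' without giving details. Your substitution $u=\sqrt{t}$ to reduce to a monotone amplitude times $\cos(Au+B)$, followed by the application of Lemma \ref{Tsang-lemma-1}, is precisely the intended argument made explicit.
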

\begin{proof}
  It follows from Lemma \ref{Tsang-lemma-1} easily.
\end{proof}

\begin{lemma}\label{s_k;v(f)=s_k;v(f;y)+error}
  Let f(n) be an arithmetic function. Then we have
\begin{equation*}
   |s_{k;v}(f)-s_{k;v}(f;y)|\ll y^{-1/2+\varepsilon}, \qquad 1\leqslant v<k.
\end{equation*}
\end{lemma}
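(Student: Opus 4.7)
The plan is to (i) exploit the symmetry of the diagonal equation to reduce matters to a tail restricted by $n_1>y$, (ii) extract a factor of $y^{-1/2+\varepsilon}$ from the weight attached to $n_1$, and (iii) verify convergence of the resulting auxiliary series. Step (iii) contains the main technical obstacle.

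For step (i), since the equation $\sqrt{n_1}+\cdots+\sqrt{n_v}=\sqrt{n_{v+1}}+\cdots+\sqrt{n_k}$ is invariant under permutations within $\{1,\ldots,v\}$ and within $\{v+1,\ldots,k\}$, a union bound over which index $n_i$ exceeds $y$ yields
\begin{equation*}
|s_{k;v}(f)-s_{k;v}(f;y)|\leqslant k\sum_{\substack{n_1>y,\ n_j\geqslant 1\,(j\geqslant 2)\\ \sqrt{n_1}+\cdots+\sqrt{n_v}=\sqrt{n_{v+1}}+\cdots+\sqrt{n_k}}}\frac{|f(n_1)\cdots f(n_k)|}{(n_1\cdots n_k)^{3/4}}.
\end{equation*}
For step (ii), note that for $n_1>y$ and $0<\varepsilon<1/2$ the monotonicity of $x\mapsto x^{-1/2+\varepsilon}$ furnishes the elementary inequality $n_1^{-3/4}\leqslant y^{-1/2+\varepsilon}\,n_1^{-1/4-\varepsilon}$. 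Substituting this and using the hypothesis $|f(n)|\ll n^{\eta}$ with $\eta>0$ chosen sufficiently small relative to $\varepsilon$, the problem is reduced to showing that
\begin{equation*}
A\,:=\sum_{\substack{n_1,\ldots,n_k\geqslant 1\\ \sqrt{n_1}+\cdots+\sqrt{n_v}=\sqrt{n_{v+1}}+\cdots+\sqrt{n_k}}}\frac{1}{n_1^{1/4+\varepsilon_1}(n_2\cdots n_k)^{3/4-\varepsilon_2}}\ll 1
\end{equation*}
for some suitably small positive $\varepsilon_1,\varepsilon_2$.

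Proving $A\ll 1$ is step (iii) and the main obstacle; it is here that the restrictiveness of the Diophantine constraint is essential, since without the equation the series $A$ would diverge. The key observation is that for each fixed $(n_2,\ldots,n_k)$ the equation determines $\sqrt{n_1}=\sqrt{n_{v+1}}+\cdots+\sqrt{n_k}-\sqrt{n_2}-\cdots-\sqrt{n_v}$ uniquely, so that at most one admissible $n_1$ arises per tuple. My approach would be a dyadic decomposition $n_i\asymp N_i$ combined with a standard counting bound of the form $O(N^{k/2+\varepsilon})$ on the number of solutions of the diagonal equation with $n_i\leqslant N=\max_i N_i$; against the weights this yields $A\ll \sum_{N=2^j}N^{-\alpha}$ with some $\alpha>0$. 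In the small cases directly relevant to the present paper ($k=4$), one may alternatively use explicit parametrizations of solutions --- for instance $\{n_1,n_2\}=\{n_3,n_4\}$ when $v=2$, and squarefree-kernel parametrizations such as $n_1=d(a+b)^2,\,n_2=da^2,\,n_3=db^2$ in the case $(k,v)=(3,1)$ --- which decouple $A$ into a product of absolutely convergent one-dimensional series. Chaining (i), (ii), (iii) then yields the claimed bound $|s_{k;v}(f)-s_{k;v}(f;y)|\ll y^{-1/2+\varepsilon}$.
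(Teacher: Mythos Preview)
The paper does not give its own argument here; it simply cites Lemma~3.1 of Zhai~\cite{Zhai-2}. Your steps (i) and (ii) are correct in spirit, with the minor caveat that the union bound in (i) must also cover the case where the large variable sits on the \emph{right} of the equation; that case is analogous but is not literally the sum you wrote with $n_1>y$.

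The real gap is in step (iii). Your primary route---dyadic boxes $n_i\sim N_i$ together with the global solution count $O(N^{k/2+\varepsilon})$, $N=\max_i N_i$---does not give $A\ll 1$. That count is blind to the individual $N_i$, and the resulting bound on a box,
\[
N^{k/2+\varepsilon}\,N_1^{-1/4-\varepsilon_1}(N_2\cdots N_k)^{-3/4+\varepsilon_2},
\]
does \emph{not} sum to $O(1)$ over all boxes: already for $k=4$, summing over dyadic boxes with $\max_i N_i=N$ contributes a term of order $N^{7/4}$, so no $\alpha>0$ as you claim exists. The observation that $n_1$ is uniquely determined by $(n_2,\ldots,n_k)$ is correct but also insufficient on its own, since $\sum_{n_2,\ldots,n_k}(n_2\cdots n_k)^{-3/4+\varepsilon_2}$ diverges and the trivial lower bound $n_1\geqslant 1$ gives nothing. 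What actually carries the proof---and what Zhai does---is precisely your ``alternative'': parametrize \emph{all} solutions via the linear independence of $\{\sqrt d:d\ \text{squarefree}\}$ over $\mathbb{Q}$ (so that, after grouping by squarefree kernel, one obtains integer linear relations among the $m_i$ with $n_i=d\,m_i^2$), and then sum directly; this decouples the problem into convergent one--parameter sums and yields the $y^{-1/2+\varepsilon}$ tail. That parametrization is the substance of the argument rather than a fallback, and it has to be executed for each sign pattern $1\leqslant v<k$, not only the two examples you list.
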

\begin{proof}
  See Lemma 3.1 of Zhai \cite{Zhai-2}.
\end{proof}

\begin{lemma}\label{kong-lamma}
   If $n,m,k,\ell\in\mathbb{N}$ such that $\sqrt{n}+\sqrt{m}\pm\sqrt{k}-\sqrt{\ell}\not=0$, then there hold
   \begin{equation*}
      |\sqrt{n}+\sqrt{m}\pm\sqrt{k}-\sqrt{\ell}|\gg(nmk\ell)^{-1/2}\max(n,m,k,\ell)^{-3/2},
   \end{equation*}
    respectively.
\end{lemma}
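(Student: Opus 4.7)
\quad
The plan is to bound $\alpha := \sqrt{n}+\sqrt{m}\pm\sqrt{k}-\sqrt{\ell}$ from below by multiplying against three carefully chosen ``conjugate'' expressions so that the resulting product $N$ is a nonzero rational integer, after which the trivial $|N|\geqslant 1$ together with sharp upper bounds on those conjugates delivers the claim. Both choices of $\pm$ are handled by the same argument, so I focus on $\alpha=\sqrt{n}+\sqrt{m}+\sqrt{k}-\sqrt{\ell}$. As a preliminary reduction, if $|\alpha|\geqslant 1$ the inequality is immediate since the right--hand side is $\leqslant M^{-7/2}\leqslant 1$; and if the maximum $M=\max(n,m,k,\ell)$ is one of $n,m,k$, then one of the positive $\sqrt{\,}$ terms dominates and a short calculation gives $|\alpha|\gg 1$. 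Hence we may assume $|\alpha|<1$ and $\ell=M$, which yields the key relation $\sqrt{\ell}=\sqrt{n}+\sqrt{m}+\sqrt{k}+O(1)$.

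Next I would multiply $\alpha$ by $\alpha_{1}:=\sqrt{n}+\sqrt{m}+\sqrt{k}+\sqrt{\ell}$ to eliminate $\sqrt{\ell}$, obtaining $\alpha\alpha_{1}=(\sqrt{n}+\sqrt{m}+\sqrt{k})^{2}-\ell\in\mathbb{Q}(\sqrt{n},\sqrt{m},\sqrt{k})$, and then by the three ``sign--flipped'' companions
\begin{equation*}
\tau_{n}:=(-\sqrt{n}+\sqrt{m}+\sqrt{k})^{2}-\ell,\quad \tau_{m}:=(\sqrt{n}-\sqrt{m}+\sqrt{k})^{2}-\ell,\quad \tau_{k}:=(\sqrt{n}+\sqrt{m}-\sqrt{k})^{2}-\ell.
\end{equation*}
The product $N:=\alpha\,\alpha_{1}\,\tau_{n}\,\tau_{m}\,\tau_{k}$ is invariant under every sign change of $\sqrt{n},\sqrt{m},\sqrt{k}$, hence a polynomial in $n,m,k,\ell$ with rational integer coefficients, so $N\in\mathbb{Z}$.

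For the upper bounds, trivially $|\alpha_{1}|\leqslant 4\sqrt{\ell}$; for each $i\in\{n,m,k\}$ write $\tau_{i}=(A_{i}-\sqrt{\ell})(A_{i}+\sqrt{\ell})$ with $A_{i}:=\sqrt{n}+\sqrt{m}+\sqrt{k}-2\sqrt{i}$, so that $A_{i}-\sqrt{\ell}=\alpha-2\sqrt{i}$. Using $|\alpha|<1$ and $\sqrt{i}\geqslant 1$,
\begin{equation*}
|A_{i}-\sqrt{\ell}|\leqslant |\alpha|+2\sqrt{i}\leqslant 1+2\sqrt{i}\ll\sqrt{i},\qquad |A_{i}+\sqrt{\ell}|\leqslant \sqrt{n}+\sqrt{m}+\sqrt{k}+\sqrt{\ell}\leqslant 4\sqrt{\ell},
\end{equation*}
so $|\tau_{i}|\ll\sqrt{i\ell}$. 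Multiplying,
\begin{equation*}
|\alpha_{1}\,\tau_{n}\,\tau_{m}\,\tau_{k}|\ll \sqrt{\ell}\cdot\sqrt{n\ell}\cdot\sqrt{m\ell}\cdot\sqrt{k\ell}=(nmk\ell)^{1/2}\,\ell^{3/2}=(nmk\ell)^{1/2}M^{3/2},
\end{equation*}
and combined with $|N|\geqslant 1$ this yields $|\alpha|\gg(nmk\ell)^{-1/2}M^{-3/2}$, as required.

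The main obstacle is verifying $N\neq 0$. A factor $\tau_{i}$ vanishes iff $A_{i}=\pm\sqrt{\ell}$, i.e., a secondary four--term relation such as $\sqrt{m}+\sqrt{k}=\sqrt{n}+\sqrt{\ell}$ holds. Squaring this and separating rational from irrational parts forces either $\{m,k\}=\{n,\ell\}$ as multisets (in which case $\alpha=2\sqrt{n}$ and the bound is trivial), or else $n,m,k,\ell$ are all perfect squares (in which case $\alpha\in\mathbb{Z}\setminus\{0\}$ so $|\alpha|\geqslant 1$). Handling these degenerate cases by direct inspection, together with an analogous analysis for $\tau_{m}=0$ and $\tau_{k}=0$, completes the proof.
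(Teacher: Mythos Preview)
The paper does not prove this lemma at all; it simply cites Kong's thesis. Your conjugate/norm argument is the standard route to such spacing estimates and is almost certainly what Kong does, so in substance your proposal matches the intended proof.

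Your outline is correct, but the degenerate-case discussion at the end is both imprecise and unnecessary. The asserted dichotomy ``either $\{m,k\}=\{n,\ell\}$ or all four are perfect squares'' is false (take $n=2,\ m=18,\ k=8,\ \ell=32$: then $\sqrt m+\sqrt k=\sqrt n+\sqrt \ell=5\sqrt2$ with neither alternative holding). Fortunately you do not need it: once you have reduced to $|\alpha|<1$ and $\ell=M$, the relation $\tau_n=0$ gives either $A_n=\sqrt\ell$, whence $\alpha=2\sqrt n\geqslant 2$ directly (no squaring required), or $A_n=-\sqrt\ell$, which forces $\sqrt m+\sqrt k+\sqrt\ell=\sqrt n$ and is impossible since $\ell\geqslant n$. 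The same holds for $\tau_m,\tau_k$, so in the $+$ case $N\neq 0$ automatically under your reductions.

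For the $-$ sign the situation is slightly different: after reducing (via the symmetry $\alpha\mapsto -\alpha$) to $\ell=M$, one of the vanishing conditions can genuinely occur, namely $\sqrt n+\sqrt k=\sqrt m+\sqrt\ell$, which yields $\alpha=2(\sqrt m-\sqrt k)$. Here $m\neq k$ (else $\alpha=0$), so
\[
|\alpha|=\frac{2|m-k|}{\sqrt m+\sqrt k}\;\geqslant\;\frac{1}{\sqrt M}\;\gg\;(nmk\ell)^{-1/2}M^{-3/2},
\]
and the remaining vanishings reduce either to this or to $|\alpha|\geqslant 2$ as before. With these small adjustments your argument is complete.
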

\begin{proof}
   See Kong~\cite{Kong}, Lemma~3.2.1.
\end{proof}

\begin{lemma}\label{Zhai-lemma-5}
   Suppose $1\leqslant N\leqslant M,\,1\leqslant K\leqslant L,\,N\leqslant K,\,M\asymp L,\,0<\Delta\ll L^{1/2}$. Let $\mathscr{A}_1(N,M,K,L;\Delta)$
denote the number of solutions of the following inequality
\begin{equation*}
    0<|\sqrt{n}+\sqrt{m}-\sqrt{k}-\sqrt{\ell}|<\Delta
\end{equation*}
with $n\sim N,\,m\sim M,\,k\sim K,\,\ell\sim L$. Then we have
\begin{equation*}
   \mathscr{A}_1(N,M,K,L;\Delta)\ll \Delta L^{1/2}NMK+NKL^{1/2+\varepsilon}.
\end{equation*}
Especially, if $\Delta L^{1/2}\gg1$, then
\begin{equation*}
  \mathscr{A}_1(N,M,K,L;\Delta)\ll \Delta L^{1/2}NMK.
\end{equation*}
\end{lemma}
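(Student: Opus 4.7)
The plan is to fix the pair $(n,k)$ and reduce the four-variable count $\mathscr{A}_1$ to a one-dimensional lattice-point-near-a-curve problem in $(m,\ell)$. Setting $\beta := \sqrt{k}-\sqrt{n}\geqslant 0$ (using $n\leqslant N\leqslant K$), the inequality $0<|\sqrt{n}+\sqrt{m}-\sqrt{k}-\sqrt{\ell}|<\Delta$ becomes $0<|\sqrt{m}-\sqrt{\ell}-\beta|<\Delta$. For any $m\sim M$, the admissible $\ell\sim L$ lies in an interval of length $\ll \Delta L^{1/2}$, so the count of integer $\ell$'s is $\ll \Delta L^{1/2}+1$.

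I would split into two regimes. In the long regime $\Delta L^{1/2}\geqslant 1$, the trivial per-$m$ bound of $O(\Delta L^{1/2})$ admissible $\ell$'s, summed over the $M$ values of $m$ and the $NK$ pairs $(n,k)$, already produces the first term $\Delta L^{1/2} NMK$; this also yields the ``Especially'' conclusion when $\Delta L^{1/2}\gg 1$, since the second term of the bound is then absorbed. In the short regime $\Delta L^{1/2}<1$, each $m$ admits at most one integer $\ell$, namely the nearest integer to $f_\beta(m):=(\sqrt{m}-\beta)^2=m-2\beta\sqrt{m}+\beta^2$, and the condition becomes $\|f_\beta(m)\|\ll \Delta L^{1/2}$ (equivalently, $\|2\beta\sqrt{m}+c_\beta\|\ll \Delta L^{1/2}$ for a constant $c_\beta$ depending only on $\beta^2$ mod $1$).

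The crucial step is then to show $R_\beta:=\#\{m\sim M:\|f_\beta(m)\|\ll \Delta L^{1/2}\}\ll \Delta L^{3/2}+L^{1/2+\varepsilon}$ uniformly in $\beta$. I would expand the indicator $\mathbf 1_{\|\cdot\|<\eta}$ via Lemma \ref{Heath-Brown-psi(u)} into a truncated Fourier series, reducing the problem to estimating the exponential sums $S_h=\sum_{m\sim M}e(-2h\beta\sqrt{m})$ for $1\leqslant h\leqslant H$. The phase $-2h\beta\sqrt{m}$ has second derivative of size $\asymp h\beta/M^{3/2}$, so applying the saddle-point transformation of Lemma \ref{Min-lemma} together with Lemma \ref{yijie} gives $|S_h|\ll (h\beta)^{1/2}M^{1/4}+M^{3/4}(h\beta)^{-1/2}$. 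Choosing $H$ to balance the truncation error $M/H$ against the Fourier contribution then yields the desired discrepancy estimate, whence summing over $(n,k)\in(N,2N]\times(K,2K]$ produces
\[
\mathscr{A}_1\ll NK(\Delta L^{3/2}+L^{1/2+\varepsilon}) \asymp \Delta L^{1/2}NMK+NKL^{1/2+\varepsilon}.
\]

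The main obstacle is the uniformity of the estimate for $R_\beta$ as $\beta$ ranges over $[0,O(\sqrt{K})]$: when $\beta$ is very small the phase is close to linear and the van der Corput test weakens. I would treat that range by the complementary observation that few $(n,k)$ generate a small $\beta$ (the set $\{(n,k):\beta<\beta_0\}$ has size $\ll N(\beta_0\sqrt{K}+1)$), so their joint contribution is directly majorized by the target bound. The diagonal pairs $n=k$ (hence $\beta=0$, reducing to $0<|\sqrt{m}-\sqrt{\ell}|<\Delta$) and the strict positivity constraint $\neq 0$ are handled by separate but routine accounting, and are absorbed into the stated estimate.
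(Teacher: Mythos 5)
The paper offers no proof of this lemma at all --- it is quoted verbatim from Zhai \cite{Zhai-3}, Lemma~5 --- so your argument can only be judged on its own terms. Your reduction (fix $(n,k)$, set $\beta=\sqrt{k}-\sqrt{n}$, and count $m\sim M$ with $\|(\sqrt{m}-\beta)^2\|\ll\Delta L^{1/2}$), your treatment of the regime $\Delta L^{1/2}\geqslant 1$ (which indeed yields the ``Especially'' clause), and the diagonal case $n=k$ are all sound, and you correctly isolate the crux, namely the uniform estimate $R_\beta\ll\Delta L^{3/2}+L^{1/2+\varepsilon}$.

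That estimate, however, does not follow from the tools you invoke; this is a genuine gap, not a bookkeeping issue. Writing $\eta=\Delta L^{1/2}$ and inserting the second--derivative--test bound $|S_h|\ll(h\beta)^{1/2}M^{1/4}+(h\beta)^{-1/2}M^{3/4}$ into the truncated Fourier expansion, then optimizing $H$ against the truncation error $M/H$, gives at best
\begin{equation*}
R_\beta\ll \eta M+\beta^{1/3}M^{1/2}+\beta^{-1/2}M^{3/4}.
\end{equation*}
The term $\beta^{-1/2}M^{3/4}$ exceeds $L^{1/2+\varepsilon}$ for every $\beta\ll M^{1/2-\varepsilon}$, and $\beta^{1/3}M^{1/2}$ exceeds it as soon as $\beta\gg M^{3\varepsilon}$; at the crossover $\beta\asymp M^{3/10}$ your bound is only $\eta M+M^{3/5}$. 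Summation over the pairs $(n,k)$ does not repair this: using $\beta\asymp|k-n|K^{-1/2}$ one finds $\sum_{n\neq k}\beta^{1/3}M^{1/2}\ll NK^{7/6}M^{1/2}$ and $\sum_{n\neq k}\beta^{-1/2}M^{3/4}\ll NK^{3/4}M^{3/4}$, and neither is $\ll NKL^{1/2+\varepsilon}$ in general (take $K\asymp L^{3/5}$: both lose a factor $L^{1/10}$). So the exponent--pair $(1/2,1/2)$ information you extract from Lemma~\ref{Min-lemma} suffices only for $|\beta|\ll L^{\varepsilon}$; for the generic range $|k-n|\gg K^{1/2}$ a stronger input is needed (a sharper lattice--point--near--a--curve estimate, or an argument exploiting the arithmetic nature of $\beta=\sqrt{k}-\sqrt{n}$, i.e.\ the condition $\|2\sqrt{km}-2\sqrt{nm}+2\sqrt{nk}\|<\eta$). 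A secondary point: your fallback for small $\beta$ (trivially bounding the $\ll N(\beta_0\sqrt{K}+1)$ exceptional pairs by $M$ solutions each) contributes $NM$, which is not $\ll NKL^{1/2+\varepsilon}$ when $K\ll L^{1/2}$; for $0<|\beta|\ll1$ you should instead use the first--derivative count $\eta M+O(\beta M^{1/2}+1)$, which does suffice in that range.
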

\begin{proof}
   See Zhai~\cite{Zhai-3}, Lemma 5.
\end{proof}

\begin{lemma}\label{Zhai-lemma-6}
  Suppose $1\leqslant N\leqslant M\leqslant K\asymp L,\,0<\Delta\ll L^{1/2}$, Let $\mathscr{A}_2(N,M,K,L;\Delta)$ denote the number
  of solutions of the inequality
\begin{equation*}
    0<|\sqrt{n}+\sqrt{m}+\sqrt{k}-\sqrt{\ell}|<\Delta
\end{equation*}
with $n\sim N,\,m\sim M,\,k\sim K,\,\ell\sim L$. Then we have
\begin{equation*}
   \mathscr{A}_2(N,M,K,L;\Delta)\ll \Delta L^{1/2}NMK+NML^{1/2+\varepsilon}.
\end{equation*}
Especially, if $\Delta L^{1/2}\gg1$, then
\begin{equation*}
  \mathscr{A}_1(N,M,K,L;\Delta)\ll \Delta L^{1/2}NMK.
\end{equation*}
\end{lemma}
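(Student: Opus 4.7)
The plan is to fix $(n,m)$ with $n\sim N,\,m\sim M$, set $\beta:=\sqrt{n}+\sqrt{m}$, and bound
$$
\mathcal{N}(n,m):=\#\{(k,\ell):k\sim K,\,\ell\sim L,\,0<|\beta+\sqrt{k}-\sqrt{\ell}|<\Delta\}
$$
by $\ll\Delta L^{3/2}+L^{1/2+\varepsilon}$; summing over the $\ll NM$ pairs $(n,m)$ and using $K\asymp L$ then yields the desired estimate for $\mathscr{A}_2$. The ``especially'' clause is immediate, because when $\Delta L^{1/2}\gg1$ one has $NM\,L^{1/2+\varepsilon}\ll\Delta L^{1/2}\,NMK$, using $K\asymp L$ and absorbing the $L^{\varepsilon}$ into $\Delta L^{1/2}K$.

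To obtain the per-pair bound, I would introduce $A:=\ell-k$ and $s:=\ell+k$. Squaring the inequality $|\sqrt{\ell}-\sqrt{k}-\beta|<\Delta$ gives $\ell+k-2\sqrt{k\ell}=\beta^{2}+O(\beta\Delta)$, and substituting $4k\ell=s^{2}-A^{2}$ and rearranging leads to the equivalent relation
$$
A^{2}=\beta^{2}(2s-\beta^{2})+O(\beta\Delta L),
$$
i.e., $s=A^{2}/(2\beta^{2})+\beta^{2}/2+O(\Delta L/\beta)$. Since $k\sim K,\,\ell\sim L,\,K\asymp L$, the admissible $A$'s are positive integers with $A\asymp\beta\sqrt{L}$. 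For each such $A$, the pair $(k,\ell)=((s-A)/2,(s+A)/2)$ is determined up to an error of $O(\Delta L/\beta)$, so the number of admissible $(k,\ell)$ attached to $A$ is $\ll 1+\Delta L/\beta$. Summing the $\Delta L/\beta$ part over the $\asymp\beta\sqrt{L}$ admissible $A$ produces $\ll\Delta L^{3/2}$, which, after summing over $(n,m)$, gives the main term $\Delta L^{1/2}NMK$.

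The ``$+1$-per-$A$'' contribution counts those integer $A$ for which the quadratic polynomial
$$
P_{\beta}(A):=\frac{A^{2}}{2\beta^{2}}+\frac{\beta^{2}}{2}
$$
is within $O(\Delta L/\beta)$ of an integer of the correct parity (so that $(s\pm A)/2$ are integers in the correct dyadic ranges). Bounding this count by $L^{1/2+\varepsilon}$ is, I expect, the main obstacle. Here I would first invoke Kong's lemma (Lemma \ref{kong-lamma}) to secure the separation $|\sqrt{n}+\sqrt{m}+\sqrt{k}-\sqrt{\ell}|\gg(NMKL)^{-1/2}L^{-3/2}$ for nonzero values, then perform a dyadic decomposition of $A$ and apply on each dyadic block a Weyl/Van der Corput-type exponential-sum estimate to the fractional parts $\{P_{\beta}(A)\}$ --- equivalently, a Jarn\'ik-Huxley-type lattice-point bound for the smooth convex arc $s=A^{2}/(2\beta^{2})+\beta^{2}/2$ in the $(A,s)$-plane. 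The resulting $L^{\varepsilon}$ absorbs the usual divisor and logarithmic losses. Structurally, the argument runs parallel to Zhai's treatment of Lemma \ref{Zhai-lemma-5} for the sign pattern $(+,+,-,-)$, and the bookkeeping needs only be adjusted for the present $(+,+,+,-)$ configuration.
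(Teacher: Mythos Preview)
The paper does not give its own proof of this lemma; it simply records ``See Zhai \cite{Zhai-3}, Lemma 6.'' So there is no in-paper argument to compare against, and your proposal must be judged on its own.

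Your treatment of the main term is fine: fixing $(n,m)$, passing to $(A,s)=(\ell-k,\ell+k)$, and observing that each admissible $A$ carries $\ll 1+\Delta L/\beta$ values of $s$ correctly produces the $\Delta L^{3/2}$ contribution per pair and hence $\Delta L^{1/2}NMK$ after summing; the ``especially'' clause follows as you say.

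The gap is in the secondary term. First, the appeal to Kong's lemma is misplaced: that lemma furnishes a \emph{lower} bound on nonzero values of $\sqrt{n}+\sqrt{m}+\sqrt{k}-\sqrt{\ell}$, which at most tells you $\mathscr{A}_2=0$ once $\Delta<(NMKL)^{-1/2}L^{-3/2}$. But that threshold is far below $L^{-1}$, and for $\Delta$ between the Kong threshold and $L^{-1}$ the main term $\Delta L^{1/2}NMK$ is still smaller than $NML^{1/2}$, so Kong's lemma does not cover the critical range. Second, and more seriously, the exponential-sum/Jarn\'ik--Huxley route you sketch for the ``$+1$ per $A$'' count does not give $L^{1/2+\varepsilon}$ uniformly in $(n,m)$. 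The polynomial $P_\beta(A)=A^{2}/(2\beta^{2})+\beta^{2}/2$ has constant second derivative $\asymp \beta^{-2}\asymp M^{-1}$, and $A$ runs over an interval of length $\asymp\beta\sqrt{L}\asymp\sqrt{ML}$. A standard Erd\H{o}s--Tur\'an/van der Corput second-derivative argument then yields, after optimising the Fourier cut-off,
\[
\#\{A:\|P_\beta(A)\|\text{ small}\}\ \ll\ M^{1/6}L^{1/2}+M^{1/2},
\]
not $L^{1/2+\varepsilon}$. Summed over the $NM$ pairs this gives $NM^{7/6}L^{1/2}$, which exceeds $NML^{1/2+\varepsilon}$ whenever $M$ is unbounded. (A concrete test case is $N=M=L^{1/2}$, $K=L$: your method produces $L^{19/12}$ for the secondary count, against the target $L^{3/2+\varepsilon}$.) Since $P_\beta$ is quadratic, higher-derivative exponent pairs are unavailable, and a Jarn\'ik $\mathcal{L}^{2/3}$ bound on the arc of length $\asymp\sqrt{ML}$ gives $(ML)^{1/3}$, again too large. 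Getting down to $L^{1/2+\varepsilon}$ per pair requires an input you have not supplied; this is precisely the content of the cited lemma in \cite{Zhai-3}, and your sketch does not reproduce it.
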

\begin{proof}
   See Zhai \cite{Zhai-3}, Lemma 6.
\end{proof}

\begin{lemma}\label{Zhai-lemma-3}
   Suppose $N_j\geqslant2\,(j=1,2,3,4),\,\Delta>0$. Let $\mathscr{A}_\pm(N_1,N_2,N_3,N_4;\Delta)$ denote the number of
     solutions of the following inequality
\begin{equation*}
   0<|\sqrt{n_1}+\sqrt{n_2}\pm\sqrt{n_3}-\sqrt{n_4}|<\Delta
\end{equation*}
with $n_j\sim N_j\,(j=1,2,3,4),\,n_j\in\mathbb{N}^*$. Then we have
\begin{equation*}
  \mathscr{A}_\pm(N_1,N_2,N_3,N_4;\Delta)\ll\prod_{j=1}^4\big(\Delta^{1/4}N_j^{7/8}+N_j^{1/2}\big)N_j^{\varepsilon}.
\end{equation*}
\end{lemma}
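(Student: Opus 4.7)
The plan is to establish the symmetric four–variable counting bound by combining a single–variable elimination estimate with Hölder's inequality, and then sharpen the resulting diagonal contribution by invoking Kong's lemma together with the paired counts of Lemmas~\ref{Zhai-lemma-5} and~\ref{Zhai-lemma-6}.

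First I would derive, for each fixed index $j\in\{1,2,3,4\}$, an asymmetric one–variable elimination estimate. Freezing the three variables $n_i$ with $i\neq j$ forces $\sqrt{n_j}$ to lie in an interval of length $O(\Delta)$; since $n\mapsto\sqrt n$ has derivative $\asymp N_j^{-1/2}$ on $n\sim N_j$, the corresponding $n_j$–interval contains $\ll\Delta N_j^{1/2}+1$ integers. This gives the bound
$\mathscr{A}_\pm\ll(\Delta N_j^{1/2}+1)\prod_{i\neq j}N_i$ for each $j$. Writing $\mathscr{A}_\pm^{4}\leq\prod_{j=1}^{4}\mathscr{A}_\pm$ and applying these four estimates factor–by–factor, then using the elementary inequality $(\Delta N_j^{1/2}+1)^{1/4}\ll\Delta^{1/4}N_j^{1/8}+1$, one reaches the intermediate symmetric bound
$\mathscr{A}_\pm\ll\prod_{j=1}^{4}\bigl(\Delta^{1/4}N_j^{7/8}+N_j^{3/4}\bigr)$, absorbing the $N_j^{\varepsilon}$ factors along the way. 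This already captures the correct leading term $\Delta^{1/4}N_j^{7/8}$, but produces $N_j^{3/4}$ rather than the desired $N_j^{1/2}$ in the additive lower–order factor.

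The main obstacle is precisely this sharpening from $N_j^{3/4}$ to $N_j^{1/2}$ in the regime where $\Delta$ is small. I would handle it by a dichotomy on the size of $\Delta$. When $\Delta\ll(N_1N_2N_3N_4)^{-1/2}(\max_j N_j)^{-3/2}$, Kong's inequality (Lemma~\ref{kong-lamma}) forces the expression $|\sqrt{n_1}+\sqrt{n_2}\pm\sqrt{n_3}-\sqrt{n_4}|$ to be either $0$ or above the stated threshold, so $\mathscr{A}_\pm=0$ and the inequality is trivial. In the complementary regime I would group the variables into the two pairs $(n_1,n_2)$ and $(n_3,n_4)$, apply Cauchy–Schwarz to fold the $\pm$–condition into a six–variable inequality of the shape $|(\sqrt{n_1}+\sqrt{n_2})-(\sqrt{n_1'}+\sqrt{n_2'})|<2\Delta$ coupled with an $(n_3,n_4)$–count, and invoke Lemmas~\ref{Zhai-lemma-5} and~\ref{Zhai-lemma-6} (for the $-$ and $+$ sign–patterns respectively) to control the resulting four–variable sums.

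The delicate point will be to run this pairing–and–Cauchy–Schwarz step once for every admissible pairing of $\{1,2,3,4\}$ and each sign configuration, and then combine the resulting asymmetric bounds by one further application of Hölder so that the output assembles into the fully symmetric product $\prod_{j}(\Delta^{1/4}N_j^{7/8}+N_j^{1/2})N_j^{\varepsilon}$, uniformly in all dyadic ranges $N_j\geq 2$. Careful bookkeeping of the various orderings needed for Lemmas~\ref{Zhai-lemma-5}--\ref{Zhai-lemma-6} (which presuppose $N\leq M$, $K\leq L$, $M\asymp L$, etc.) is where I expect the technical difficulty to concentrate, since the lemmas' hypotheses force a canonical ordering that must ultimately be symmetrized away.
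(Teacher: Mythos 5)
First, a point of reference: the paper itself contains no proof of this lemma --- it is imported verbatim from Zhai \cite{Zhai-3}, Lemma 3 --- so there is nothing internal to compare your argument with, and it must stand on its own. Your first step is correct: eliminating one variable at a time gives $\mathscr{A}_\pm\ll(\Delta N_j^{1/2}+1)\prod_{i\neq j}N_i$ for each $j$, and the fourth-root/Hölder symmetrization legitimately yields $\mathscr{A}_\pm\ll\prod_j(\Delta^{1/4}N_j^{7/8}+N_j^{3/4})$. You also correctly identify that the entire content of the lemma is the improvement of $N_j^{3/4}$ to $N_j^{1/2}$.

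The proposed sharpening, however, does not close that gap. After covering the line by intervals of length $\Delta$ and applying Cauchy--Schwarz to the pairs $(n_1,n_2)$ and $(n_3,n_4)$, you are reduced to the two ``energy'' counts $\#\{|\sqrt{n_1}+\sqrt{n_2}-\sqrt{n_1'}-\sqrt{n_2'}|<2\Delta\}$ and its analogue for $(n_3,n_4)$; note that both are of the two-plus/two-minus shape of Lemma \ref{Zhai-lemma-5} whichever sign $\pm$ occurs, so Lemma \ref{Zhai-lemma-6} plays no role in this pairing. Lemma \ref{Zhai-lemma-5} controls the off-diagonal energy only up to $\Delta N_1^2N_2^{3/2}+N_1^2N_2^{1/2+\varepsilon}$, and the secondary term $N_1^2N_2^{1/2}$ is fatal: after the square root it contributes $N_1N_2^{1/4}\cdot N_3N_4^{1/4}$, which is not $\ll\prod_jN_j^{1/2}$ unless $N_1\ll N_2^{1/2}$ and $N_3\ll N_4^{1/2}$. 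Concretely, take $N_1=N_2=N_3=N_4=N$ and $\Delta=N^{-2}$: Kong's lemma is inoperative (its threshold is $N^{-7/2}$), the claimed bound is $N^{2+\varepsilon}$, your first step gives $N^{3}$, and the Cauchy--Schwarz step gives $(N^{2+\varepsilon}+\Delta N^{7/2}+N^{5/2+\varepsilon})^{1/2}(\,\cdot\,)^{1/2}\asymp N^{5/2}$; no further Hölder interpolation between $N^3$ and $N^{5/2}$ can produce $N^2$. What is actually needed --- and what none of the quoted lemmas supplies --- is the sharper energy estimate $\#\{|\sqrt{n_1}+\sqrt{n_2}-\sqrt{n_1'}-\sqrt{n_2'}|<\delta\}\ll\delta N_1^2N_2^{3/2}+(N_1N_2)^{1+\varepsilon}$, i.e.\ that near-coincidences of $\sqrt{n_1}+\sqrt{n_2}$ are essentially no more numerous than the exact coincidences counted via the linear independence of square roots of distinct squarefree integers. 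That input, fed into the positivity (double large sieve) form of your Cauchy--Schwarz step, is the real substance of Zhai's proof; without it your scheme proves the lemma only with $N_j^{5/8}$ in place of $N_j^{1/2}$, which is not the statement claimed.
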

\begin{proof}
   See Zhai~\cite{Zhai-3}, Lemma 3.
\end{proof}

\section{Analogue of Vorono\"{i}'s Formula}
For the Dirichlet divisor problem, there exists the following truncated Vorono\"{i}'s formula, i.e.
\begin{equation*}
  \Delta(x)=\frac{x^{1/4}}{\sqrt{2}\pi}\sum_{n\leqslant N}\frac{d(n)}{n^{3/4}}\cos\big(4\pi\sqrt{nx}-\pi/4\big)+O\big(x^{1/2+\varepsilon}N^{-1/2}\big),
\end{equation*}
for $1\leqslant N\ll x$. This formula plays a significant role in the study of the higher--power moments of $\Delta(x)$.
But for $\Delta(M_1M_2x;\ell_1,M_1,\ell_2,M_2)$, there is not such a convenient formula at hand. However, we can use the finite expression of $\psi(x)$
and van der Corput's B--process to establish an analogue of Vorono\"{i}'s formula.

Suppose $T/2\leqslant x\leqslant T$. According to Dirichlet's hyperbolic summation method, we write
\begin{equation*}
  \sum_{n\leqslant M_1M_2x}d(n;\ell_1,M_1,\ell_2,M_2)=\sum_{\substack{n_1n_2\leqslant M_1M_2x\\ n_1\equiv \ell_1(\!\bmod M_1)\\n_2\equiv\ell_2(\!\bmod M_2)}}1
  =\Sigma_1+\Sigma_2-\Sigma_3,
\end{equation*}
where
\begin{equation*}
   \Sigma_1=\sum_{\substack{n_1\leqslant\sqrt{M_1M_2x}\\ n_1\equiv \ell_1(\!\bmod M_1)}}
           \sum_{\substack{n_2\leqslant\frac{M_1M_2x}{n_1}\\n_2\equiv \ell_2(\!\bmod M_2) }}1, \qquad
       \Sigma_2=\sum_{\substack{n_2\leqslant\sqrt{M_1M_2x}\\ n_2\equiv \ell_2(\!\bmod M_2)}}
           \sum_{\substack{n_1\leqslant\frac{M_1M_2x}{n_2}\\n_1\equiv \ell_1(\!\bmod M_1) }}1,
\end{equation*}
\begin{equation*}
 \Sigma_3=\sum_{\substack{n_1\leqslant\sqrt{M_1M_2x}\\ n_1\equiv \ell_1(\!\bmod M_1)}} 1
              \sum_{\substack{n_2\leqslant\sqrt{M_1M_2x}\\ n_2\equiv \ell_2(\!\bmod M_2)}} 1.
\end{equation*}
Computing the above three sums directly, it is easy to derive that
\begin{equation}\label{Delta=F_12+F_21}
 \Delta(M_1M_2x;\ell_1,M_1,\ell_2,M_2)=F_{12}(x)+F_{21}(x)+O(1),
\end{equation}
where
\begin{equation*}
 F_{12}(x):=-\sum_{\substack{n_1\leqslant\sqrt{M_1M_2x}\\ n_1\equiv\ell_1(\!\bmod M_1)}}\psi\bigg(\frac{M_1x}{n_1}-\frac{\ell_2}{M_2}\bigg),
\end{equation*}
\begin{equation*}
 F_{21}(x):=-\sum_{\substack{n_1\leqslant\sqrt{M_1M_2x}\\ n_2\equiv\ell_2(\!\bmod M_2)}}\psi\bigg(\frac{M_2x}{n_2}-\frac{\ell_1}{M_1}\bigg).
\end{equation*}
First, we consider $F_{12}(x)$. Suppose that $H\geqslant2$ is a parameter which is to be determined later. By Lemma \ref{Heath-Brown-psi(u)}, we get
\begin{equation}\label{F_12=R_12+G_12}
 F_{12}(x)=R_{12}(x;H)+G_{12}(x;H),
\end{equation}
where
\begin{equation*}
 R_{12}(x;H):=\frac{1}{2\pi i}\sum_{1\leqslant |h|\leqslant H}\frac{1}{h}\sum_{\substack{n_1\leqslant\sqrt{M_1M_2x}\\ n_1\equiv\ell_1(\!\bmod M_1)}}
 e\bigg(\frac{hM_1x}{n_1}-\frac{h\ell_2}{M_2}\bigg),
\end{equation*}
\begin{equation}\label{G_12(x;H)-def}
 G_{12}(x;H):=O\left(\sum_{\substack{n_1\leqslant\sqrt{M_1M_2x}\\ n_1\equiv\ell_1(\!\bmod M_1)}}
    \min\Bigg(1,\frac{1}{H\big\|\frac{M_1x}{n_1}-\frac{\ell_2}{M_2}\big\|}\Bigg)\right).
\end{equation}
By a splitting argument, we have
\begin{equation*}
 R_{12}(x;H)=\frac{1}{2\pi i}\sum_{1\leqslant |h|\leqslant H}\frac{1}{h}\sum_{j=0}^J
 \sum_{\substack{\sqrt{\frac{M_1M_2x}{2^{j+1}}}<n_1\leqslant\sqrt{\frac{M_1M_2x}{2^j}} \\ n_1\equiv\ell_1(\!\bmod M_1)}}
 e\bigg(\frac{hM_1x}{n_1}-\frac{h\ell_2}{M_2}\bigg)+O(\mathscr{L}^2),
\end{equation*}
where
\begin{equation*}
 J=\bigg[\frac{\mathscr{L}-2\log\mathscr{L}}{\log2}\bigg].
\end{equation*}
It is easy to see that
\begin{equation}\label{R_12-Sigma_12}
 R_{12}(x;H)=-\frac{\Sigma_{12}}{2\pi i}+\frac{\overline{\Sigma_{12}}}{2\pi i}+O(\mathscr{L}^2),
\end{equation}
where
\begin{equation}\label{Sigma_{12}}
\Sigma_{12}=\sum_{1\leqslant h\leqslant H}\frac{1}{h}\sum_{j=0}^J\mathcal{S}(x,h,j),
\end{equation}
\begin{equation}\label{S(x,h,j)-def}
\mathcal{S}(x,h,j):= \sum_{\substack{\sqrt{\frac{M_1M_2x}{2^{j+1}}}<n_1\leqslant\sqrt{\frac{M_1M_2x}{2^j}} \\ n_1\equiv\ell_1(\!\bmod M_1)}}
 e\bigg(-\frac{hM_1x}{n_1}+\frac{h\ell_2}{M_2}\bigg).
\end{equation}
Applying Lemma \ref{Min-lemma} to (\ref{S(x,h,j)-def}), we obtain
\begin{align}\label{S(x,h,j)-transfer}
  \mathcal{S}(x,h,j) = & \sum_{\substack{k\\ \sqrt{\frac{M_1M_2x}{2^{j+1}}}<kM_1+\ell_1\leqslant\sqrt{\frac{M_1M_2x}{2^j}}  }}
                            e\bigg(-\frac{hM_1x}{kM_1+\ell_1}+\frac{h\ell_2}{M_2}\bigg)   \nonumber \\
     = & \frac{x^{1/4}}{\sqrt{2}}\sideset{}{'}\sum_{\frac{2^jhM_1}{M_2}\leqslant r\leqslant \frac{2^{j+1}hM_1}{M_2}}\frac{h^{1/4}}{r^{3/4}}
          e\bigg(-2\sqrt{hrx}+\frac{h\ell_2}{M_2}+\frac{r\ell_1}{M_1}-\frac{1}{8}\bigg)+O(\mathscr{L}).
\end{align}
Putting (\ref{S(x,h,j)-transfer}) into (\ref{Sigma_{12}}), we get
\begin{align}\label{Sigma_(12)-transfer}
  \Sigma_{12} = & \frac{x^{1/4}}{\sqrt{2}}\sum_{1\leqslant h\leqslant H}\sum_{j=0}^J
             \sideset{}{'}\sum_{\frac{2^jhM_1}{M_2}\leqslant r\leqslant \frac{2^{j+1}hM_1}{M_2}}\frac{1}{(hr)^{3/4}}
              e\bigg(-2\sqrt{hrx}+\frac{h\ell_2}{M_2}+\frac{r\ell_1}{M_1}-\frac{1}{8}\bigg)+O(\mathscr{L}^3)
                   \nonumber  \\
   = & \frac{x^{1/4}}{\sqrt{2}}\sum_{1\leqslant h\leqslant H}
              \sideset{}{'}\sum_{\frac{hM_1}{M_2}\leqslant r\leqslant\frac{2^{J+1}hM_1}{M_2}}\frac{1}{(hr)^{3/4}}
              e\bigg(-2\sqrt{hrx}+\frac{h\ell_2}{M_2}+\frac{r\ell_1}{M_1}-\frac{1}{8}\bigg)+O(\mathscr{L}^3).
\end{align}
Define
\begin{equation*}
  \tau_{12}(n,x;H):=\sideset{}{'}\sum_{\substack{n=hr\\ 1\leqslant h\leqslant H\\ \frac{hM_1}{M_2}\leqslant r\leqslant\frac{2^{J+1}hM_1}{M_2}}}
  \cos\bigg(4\pi\sqrt{nx}-2\pi\bigg(\frac{h\ell_2}{M_2}+\frac{r\ell_1}{M_1}+\frac{1}{8}\bigg)\bigg)
\end{equation*}
and
\begin{equation*}
  R_{12}'(x;H):=\frac{x^{1/4}}{\sqrt{2}\pi}\sum_{n\leqslant \frac{2^{J+1}H^2M_1}{M_2}}\frac{\tau_{12}(n,x;H)}{n^{3/4}}.
\end{equation*}
Inserting (\ref{Sigma_(12)-transfer}) into (\ref{R_12-Sigma_12}), we obtain
\begin{align}\label{R_12=R12'+L^3}
    &  R_{12}(x;H)
               \nonumber  \\
    = & \frac{x^{1/4}}{2\sqrt{2}\pi i}\sum_{1\leqslant h\leqslant H}\sideset{}{'}\sum_{\frac{hM_1}{M_2}\leqslant r\leqslant\frac{2^{J+1}hM_1}{M_2}}
                \frac{1}{(hr)^{3/4}}
                        \nonumber  \\
   & \times \bigg(e\bigg(2\sqrt{hrx}-\frac{h\ell_2}{M_2}-\frac{r\ell_1}{M_1}+\frac{1}{8}\bigg)-
                  e\bigg(-2\sqrt{hrx}+\frac{h\ell_2}{M_2}+\frac{r\ell_1}{M_1}-\frac{1}{8}\bigg)\bigg)
               +O(\mathscr{L}^3)
                        \nonumber  \\
    = & \frac{x^{1/4}}{\sqrt{2}\pi }\sum_{1\leqslant h\leqslant H}\sideset{}{'}\sum_{\frac{hM_1}{M_2}\leqslant r\leqslant\frac{2^{J+1}hM_1}{M_2}}
                \frac{1}{(hr)^{3/4}}
       \sin\bigg(4\pi\sqrt{hrx}-2\pi\bigg(\frac{h\ell_2}{M_1}+\frac{r\ell_1}{M_1}-\frac{1}{8}\bigg)\bigg)+O(\mathscr{L}^3)
                         \nonumber  \\
    = & \frac{x^{1/4}}{\sqrt{2}\pi }\sum_{1\leqslant h\leqslant H}\sideset{}{'}\sum_{\frac{hM_1}{M_2}\leqslant r\leqslant\frac{2^{J+1}hM_1}{M_2}}
                \frac{1}{(hr)^{3/4}}
       \cos\bigg(4\pi\sqrt{hrx}-2\pi\bigg(\frac{h\ell_2}{M_1}+\frac{r\ell_1}{M_1}+\frac{1}{8}\bigg)\bigg)+O(\mathscr{L}^3)
                        \nonumber  \\
    = & R'_{12}(x;H)+O(\mathscr{L}^3).
\end{align}
Define
\begin{equation*}
 \tau_{12}(n,x):=\sideset{}{'}\sum_{\substack{n=hr\\ \frac{hM_1}{M_2}\leqslant r}}
                 \cos\bigg(4\pi\sqrt{nx}-2\pi\bigg(\frac{h\ell_2}{M_2}+\frac{r\ell_1}{M_1}+\frac{1}{8}\bigg)\bigg).
\end{equation*}
It is easy to check that if $n\leqslant\min(H^2,T)\mathscr{L}^{-3}$, then $h\leqslant H$ and $r\leqslant\frac{2^{J+1}hM_1}{M_2}$. Therefore, we have
\begin{equation*}
 \tau_{12}(n,x;H)=\tau_{12}(n,x),\qquad n\leqslant \min(H^2,T)\mathscr{L}^{-3}.
\end{equation*}
Suppose $T^\varepsilon<y\leqslant\min(H^2,T)\mathscr{L}^{-3}$ is a parameter which is to be determined later. Define
\begin{equation}\label{R_(12)(x;y)-def}
  R_{12}(x;y):=\frac{x^{1/4}}{\sqrt{2}\pi}\sum_{n\leqslant y}\frac{\tau_{12}(n,x)}{n^{3/4}}
\end{equation}
and
\begin{equation}\label{R_(12)(x;y,H)-def}
  R^*_{12}(x;y,H):=\frac{x^{1/4}}{\sqrt{2}\pi}\sum_{y<n\leqslant \frac{2^{J+1}H^2M_1}{M_2}}\frac{\tau_{12}(n,x;H)}{n^{3/4}}.
\end{equation}
Then from (\ref{F_12=R_12+G_12}), (\ref{R_12=R12'+L^3}), (\ref{R_(12)(x;y)-def}) and (\ref{R_(12)(x;y,H)-def}), we get
\begin{equation}\label{F_12-fenjie}
  F_{12}(x)=R_{12}(x;y)+R_{12}^*(x;y,H)+G_{12}(x;H)+O(\mathscr{L}^3).
\end{equation}
By symmetry, we also have
\begin{equation}\label{F_21-fenjie}
  F_{21}(x)=R_{21}(x;y)+R_{21}^*(x;y,H)+G_{21}(x;H)+O(\mathscr{L}^3),
\end{equation}
where
\begin{equation*}
  R_{21}(x;y)=\frac{x^{1/4}}{\sqrt{2}\pi}\sum_{n\leqslant y}\frac{\tau_{21}(n,x)}{n^{3/4}},
\end{equation*}
\begin{equation*}
 \tau_{21}(n,x)=\sideset{}{'}\sum_{\substack{n=hr\\ \frac{hM_2}{M_1}\leqslant r}}
                 \cos\bigg(4\pi\sqrt{nx}-2\pi\bigg(\frac{h\ell_1}{M_1}+\frac{r\ell_2}{M_2}+\frac{1}{8}\bigg)\bigg),
\end{equation*}
\begin{equation*}
  R^*_{21}(x;y,H)=\frac{x^{1/4}}{\sqrt{2}\pi}\sum_{y<n\leqslant \frac{2^{J+1}H^2M_2}{M_1}}\frac{\tau_{21}(n,x;H)}{n^{3/4}},
\end{equation*}
\begin{equation*}
  \tau_{21}(n,x;H)=\sideset{}{'}\sum_{\substack{n=hr\\ 1\leqslant h\leqslant H\\ \frac{hM_2}{M_1}\leqslant r\leqslant\frac{2^{J+1}hM_2}{M_1}}}
  \cos\bigg(4\pi\sqrt{nx}-2\pi\bigg(\frac{h\ell_1}{M_1}+\frac{r\ell_2}{M_2}+\frac{1}{8}\bigg)\bigg),
\end{equation*}
\begin{equation*}
 G_{21}(x;H)=O\left(\sum_{\substack{n_2\leqslant\sqrt{M_1M_2x}\\ n_2\equiv\ell_2(\!\bmod M_2)}}
    \min\Bigg(1,\frac{1}{H\big\|\frac{M_2x}{n_2}-\frac{\ell_1}{M_1}\big\|}\Bigg)\right).
\end{equation*}
Define
\begin{equation*}
 \tau(n,x):=\sum_{n=hr}\cos\bigg(4\pi\sqrt{nx}-2\pi\bigg(\frac{h\ell_2}{M_2}+\frac{r\ell_1}{M_1}+\frac{1}{8}\bigg)\bigg)
\end{equation*}
and
\begin{equation*}
  R_0(x;y):=\frac{x^{1/4}}{\sqrt{2}\pi}\sum_{n\leqslant y}\frac{\tau(n,x)}{n^{3/4}}.
\end{equation*}
From the definition of $\tau_{12}(n,x)$ and $\tau_{21}(n,x)$, we derive that
\begin{align*}
  \tau_{12}(n,x)+\tau_{21}(n,x) = & \sideset{}{'}\sum_{\substack{n=hr\\ \frac{hM_1}{M_2}\leqslant r}}
                 \cos\bigg(4\pi\sqrt{nx}-2\pi\bigg(\frac{h\ell_2}{M_2}+\frac{r\ell_1}{M_1}+\frac{1}{8}\bigg)\bigg)
                                              \nonumber \\
                               & +\sideset{}{'}\sum_{\substack{n=hr\\ \frac{hM_2}{M_1}\leqslant r}}
                                  \cos\bigg(4\pi\sqrt{nx}-2\pi\bigg(\frac{h\ell_1}{M_1}+\frac{r\ell_2}{M_2}+\frac{1}{8}\bigg)\bigg)
                                              \nonumber \\
                            =  & \sideset{}{'}\sum_{\substack{n=hr\\ \frac{hM_1}{M_2}\leqslant r}}
                 \cos\bigg(4\pi\sqrt{nx}-2\pi\bigg(\frac{h\ell_2}{M_2}+\frac{r\ell_1}{M_1}+\frac{1}{8}\bigg)\bigg)
                                              \nonumber \\
\end{align*}
\begin{align*}
                               & +\sideset{}{'}\sum_{\substack{n=hr\\ r\leqslant\frac{hM_1}{M_2}}}
                                  \cos\bigg(4\pi\sqrt{nx}-2\pi\bigg(\frac{h\ell_2}{M_2}+\frac{r\ell_1}{M_1}+\frac{1}{8}\bigg)\bigg)
                                              \nonumber \\
                            =  & \sum_{n=hr}\cos\bigg(4\pi\sqrt{nx}-2\pi\bigg(\frac{h\ell_2}{M_2}+\frac{r\ell_1}{M_1}+\frac{1}{8}\bigg)\bigg)
                                 =\tau(n,x),
\end{align*}
which implies
\begin{equation}\label{R_0=R_12+R_21}
  R_0(x;y)=R_{12}(x;y)+R_{21}(x;y).
\end{equation}
Combining (\ref{Delta=F_12+F_21}), (\ref{F_12-fenjie})--(\ref{R_0=R_12+R_21}), we obtain
\begin{align}\label{Delta=R_0+R12+R_21+G_12+G_21}
 \Delta(M_1M_2x;\ell_1,M_1,\ell_2,M_2)= & R_0(x;y)+R_{12}^*(x;y,H)+R_{21}^*(x;y,H)
                                               \nonumber \\
                                        &    +G_{12}(x;H)+G_{21}(x;H)+O(\mathscr{L}^3).
\end{align}
For simplicity, we denote $\mathscr{R}_0=R_0(x;y),\,\mathscr{R}_1=R_{12}^*(x;y,H)$, $\mathscr{R}_2=R_{21}^*(x;y,H)$,
$\mathscr{R}=\mathscr{R}_0+\mathscr{R}_1+\mathscr{R}_2$ and $\mathscr{G}=G_{12}(x;H)+G_{21}(x;H)$. Then (\ref{Delta=R_0+R12+R_21+G_12+G_21}) can be
written as
\begin {equation*}
 \Delta(M_1M_2x;\ell_1,M_1,\ell_2,M_2)= \mathscr{R}+\mathscr{G}+O(\mathscr{L}^3).
\end{equation*}

\section{Proof of Theorem \ref{theorem-1}}

In this section, we shall prove Theorem \ref{theorem-1}. Take $H=T^8$ and $y=T^{3/4}$. By the elementary formula
\begin{equation}\label{elementary-formula }
  (a+b)^4=a^4+O(|a|^3|b|+b^4),
\end{equation}
we have
\begin {equation}\label{Delta(M_1M_2x)-fenjie}
 \Delta^4(M_1M_2x;\ell_1,M_1,\ell_2,M_2)= \mathscr{R}^4+O\big(|\mathscr{R}|^3|\mathscr{G}|+|\mathscr{R}|^3\mathscr{L}^3+\mathscr{G}^4+\mathscr{L}^{12}\big).
\end{equation}

By a splitting argument, it is sufficient to prove the result in the interval $[T/2,T]$. We will divide the process of the proof of Theorem \ref{theorem-1}
into two parts.

\begin{proposition}\label{proposition-1}
   For $T\geqslant10,\,y=T^{3/4}$, we have
\begin{equation*}
\int_{\frac{T}{2}}^T\mathscr{R}^4\mathrm{d}x=\frac{1}{32\pi^4}C_4\bigg(\frac{\ell_1}{M_1},\frac{\ell_2}{M_2}\bigg)\int_{\frac{T}{2}}^Tx\mathrm{d}x
+O(T^{2-1/8+\varepsilon}).
\end{equation*}
\end{proposition}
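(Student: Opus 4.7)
The plan is to expand $\mathscr{R}^4=(\mathscr{R}_0+\mathscr{R}_1+\mathscr{R}_2)^4$ multinomially and show that only the pure diagonal part of $\int_{T/2}^T\mathscr{R}_0^4\,\mathrm{d}x$ survives as a main term, while every other piece is absorbed in $O(T^{2-1/8+\varepsilon})$. Since $R_0(x;y)$ is an explicit cosine sum of length $y=T^{3/4}$ with coefficients $n^{-3/4}\tau(n,x)$, after writing each cosine as $\tfrac12(e(\cdot)+e(-\cdot))$ the fourth power becomes a sum of $16$ exponential integrals
\[
\int_{T/2}^{T} x^{1}\,e\!\bigl(2(\epsilon_1\sqrt{n_1}+\epsilon_2\sqrt{n_2}+\epsilon_3\sqrt{n_3}+\epsilon_4\sqrt{n_4})\sqrt{x}\bigr)\mathrm{d}x,\qquad \epsilon_j\in\{\pm1\},
\]
weighted by the Hurwitz-twist phases coming from $\tau(n,x)$. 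I split these into the diagonal set $D=\{\epsilon_1\sqrt{n_1}+\cdots+\epsilon_4\sqrt{n_4}=0\}$ and the complement.

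On the diagonal only the sign patterns with two plus and two minus signs survive (so $\sqrt{n_1}+\sqrt{n_2}=\sqrt{n_3}+\sqrt{n_4}$), producing $s_{4;2}$-type series against the Hurwitz phases; the $x$-integral on these terms is exactly $\int_{T/2}^T x\,\mathrm{d}x$. Collecting the coefficients and extending the truncated series $s_{4;2}(\cdot;y)$ to $s_{4;2}(\cdot)$ by Lemma~\ref{s_k;v(f)=s_k;v(f;y)+error}, with the choice $y=T^{3/4}$, produces the claimed main term $\tfrac{1}{32\pi^4}C_4(\ell_1/M_1,\ell_2/M_2)\int_{T/2}^T x\,\mathrm{d}x$ together with an extension error of size $T\cdot y^{-1/2+\varepsilon}=T^{5/8+\varepsilon}$, which is admissible.

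The central estimate is the off-diagonal part of $\int_{T/2}^T\mathscr{R}_0^4\,\mathrm{d}x$, where I first use Lemma~\ref{yijie} to bound the $x$-integral by $T^{1/2}|{\sum}\epsilon_j\sqrt{n_j}|^{-1}$ and then dyadically decompose $n_j\sim N_j\le y$. For each sign pattern I need to count quadruples with $0<|\sqrt{n_1}+\sqrt{n_2}\pm\sqrt{n_3}-\sqrt{n_4}|<\Delta$ for various $\Delta$. I split the range of $|\sum\epsilon_j\sqrt{n_j}|$ dyadically, using the counting bounds of Lemma~\ref{Zhai-lemma-5}, Lemma~\ref{Zhai-lemma-6} and Lemma~\ref{Zhai-lemma-3}; the lower endpoint of the dyadic partition is supplied by Kong's lemma (Lemma~\ref{kong-lamma}), which gives the sharp gap $|\sqrt{n}+\sqrt{m}\pm\sqrt{k}-\sqrt{\ell}|\gg(nmk\ell)^{-1/2}\max^{-3/2}$. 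It is precisely this replacement of the classical gap with Kong's stronger bound that upgrades Liu's exponent $3/28$ to $\vartheta_4=1/8$. After balancing the dyadic ranges one obtains an off-diagonal contribution $\ll T^{2-1/8+\varepsilon}$.

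Finally I deal with the mixed contributions $\mathscr{R}_0^{a_0}\mathscr{R}_1^{a_1}\mathscr{R}_2^{a_2}$ with $a_1+a_2\ge 1$. By Cauchy--Schwarz (or Hölder) and the second-power moment already at our disposal, these reduce to mean-square estimates for the tails $\mathscr{R}_i=R^*_{i,\,3-i}(x;y,H)$. An expansion and the same Kong-plus-Zhai counting show $\int_{T/2}^T\mathscr{R}_i^2\mathrm{d}x\ll T^{3/2+\varepsilon}y^{-1/2}=T^{9/8+\varepsilon}$, which combined with $\int\mathscr{R}_0^4\ll T^2$ comfortably keeps every mixed product below $T^{2-1/8+\varepsilon}$. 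The main obstacle is the off-diagonal analysis of Step~2: one must carefully match the sign pattern to the appropriate counting lemma and optimise the dyadic balance so that Kong's improved gap actually delivers the exponent $1/8$ uniformly across all dyadic quadruples $(N_1,N_2,N_3,N_4)$ with $N_i\le T^{3/4}$.
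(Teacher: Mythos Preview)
Your overall strategy matches the paper's: split $\mathscr{R}_0^4$ into diagonal plus off-diagonal, extract the main term via Lemma~\ref{s_k;v(f)=s_k;v(f;y)+error}, and bound the off-diagonal using Lemma~\ref{yijie} followed by a dyadic count with Lemmas~\ref{Zhai-lemma-5}--\ref{Zhai-lemma-3}, where Kong's Lemma~\ref{kong-lamma} supplies the sharpened gap that produces $\vartheta_4=1/8$. Two points, however, need repair.

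First, your diagonal description is wrong. You assert that only the two-plus/two-minus sign patterns survive, producing only $s_{4;2}$. But patterns of type $+++-$ (and $+---$) also admit solutions $\sqrt{n_1}+\sqrt{n_2}+\sqrt{n_3}=\sqrt{n_4}$; these contribute the terms $s_{4;1}$ and $s_{4;3}$ to the constant $C_4(\ell_1/M_1,\ell_2/M_2)=\sum_{v=1}^{3}\binom{3}{v}s_{4;v}$ (see~(\ref{C_4(l_1/M_1,l_2/M_2)-def})). This affects only the identification of the constant, not the error analysis, but your main term as written would be incorrect.

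Second, and more seriously, your treatment of the mixed terms does not close. From $\int\mathscr{R}_0^4\ll T^2$ together with a tail mean square $\int(\mathscr{R}_1+\mathscr{R}_2)^2\ll T^{3/2+\varepsilon}y^{-1/2}$ alone, no H\"older split bounds $\int|\mathscr{R}_0|^3|\mathscr{R}_1+\mathscr{R}_2|$ or $\int|\mathscr{R}_1+\mathscr{R}_2|^4$ to the required $O(T^{15/8+\varepsilon})$: every split demands either a higher moment of the tail or a moment of $\mathscr{R}_0$ beyond the fourth, neither of which you establish. The paper resolves this by importing from Liu~\cite{Liu-Kui} the eighth-moment bound $\int_{T/2}^{T}|\mathscr{R}_1+\mathscr{R}_2|^8\,\mathrm{d}x\ll T^{3+\varepsilon}$ and interpolating against the second moment to obtain $\int|\mathscr{R}_1+\mathscr{R}_2|^4\ll T^{13/12+\varepsilon}$, whence $(\int\mathscr{R}_0^4)^{3/4}(\int|\mathscr{R}_1+\mathscr{R}_2|^4)^{1/4}\ll T^{85/48+\varepsilon}$, which is admissible. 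You must either invoke this eighth-moment input or supply an equivalent higher-moment estimate for the tail; the second-moment bound by itself is insufficient.
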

\begin{proof}
  From (\ref{elementary-formula }), we get
\begin{equation*}
   \mathscr{R}^4=\big(\mathscr{R}_0+(\mathscr{R}_1+\mathscr{R}_2)\big)^4=\mathscr{R}_0^4+O\big(|\mathscr{R}_0|^3|\mathscr{R}_1
                                          +\mathscr{R}_2|+|\mathscr{R}_1+\mathscr{R}_2|^4\big)
\end{equation*}
Therefore, we have
\begin{equation}\label{R_4-mean-explicit}
   \int_{\frac{T}{2}}^T\mathscr{R}^4\mathrm{d}x =\int_{\frac{T}{2}}^T\mathscr{R}_0^4\mathrm{d}x
   +O\bigg(\int_{\frac{T}{2}}^T|\mathscr{R}_0|^3|\mathscr{R}_1+\mathscr{R}_2|\mathrm{d}x+\int_{\frac{T}{2}}^T|\mathscr{R}_1+\mathscr{R}_2|^4\mathrm{d}x\bigg).
\end{equation}
  According to the elementary formula
\begin{equation*}
\cos{a_1}\cos{a_2}\cdots\cos{a_k}=\frac{1}{2^{k-1}}\sum_{(i_1,i_2\cdots,i_{k-1})\in\{0,1\}^{k-1}}\cos\big(a_1+(-1)^{i_1}a_2+\cdots+(-1)^{i_{k-1}}a_k\big),
\end{equation*}
we have
\begin{align*}
   \mathscr{R}_0^4 = &\frac{x}{4\pi^4}\sum_{n_1,n_2,n_3,n_4\leqslant y}\frac{\tau(n_1,x)\tau(n_2,x)\tau(n_3,x)\tau(n_4,x)}{(n_1n_2n_3n_4)^{3/4}}
                               \nonumber \\
   = & \frac{x}{4\pi^4}\sum_{n_1,n_2,n_3,n_4\leqslant y}\frac{1}{(n_1n_2n_3n_4)^{3/4}}
                                \nonumber \\
   &   \qquad\times\sum_{\substack{n_j=h_jr_j\\j=1,2,3,4}}\prod_{j=1}^4
             \cos\bigg(4\pi\sqrt{n_jx}-2\pi\bigg(\frac{h_j\ell_2}{M_2}+\frac{r_j\ell_1}{M_1}+\frac{1}{8}\bigg)\bigg)
                               \nonumber \\
   = & \frac{x}{32\pi^4}\sum_{(i_1,i_2,i_3)\in\{0,1\}^3}\sum_{n_1,n_2,n_3,n_4\leqslant y}\frac{1}{(n_1n_2n_3n_4)^{3/4}}
                               \nonumber \\
   &   \qquad\times\sum_{\substack{n_j=h_jr_j\\j=1,2,3,4}}
        \cos\big(4\pi\boldsymbol{\alpha}(\mathbf{n},\mathbf{i})\sqrt{x}-2\pi\boldsymbol{\beta}(\mathbf{h},\mathbf{r},\mathbf{i})\big),
\end{align*}
where $\mathbf{n}=(n_1,n_2,n_3,n_4)\in\mathbb{N}^4,\,\mathbf{i}=(i_1,i_2,i_3)\in\{0,1\}^3$ and
\begin{align*}
            \boldsymbol{\alpha}(\mathbf{n},\mathbf{i})
   := & \sqrt{n_1}+(-1)^{i_1}\sqrt{n_2}+(-1)^{i_2}\sqrt{n_3}+(-1)^{i_3}\sqrt{n_4},  \\
          \boldsymbol{\beta}(\mathbf{h},\mathbf{r},\mathbf{i})
   := & \bigg(\frac{h_1\ell_2}{M_2}+\frac{r_1\ell_1}{M_1}+\frac{1}{8}\bigg)+(-1)^{i_1}\bigg(\frac{h_2\ell_2}{M_2}+\frac{r_2\ell_1}{M_1}+\frac{1}{8}\bigg)   \\
      & +(-1)^{i_2}\bigg(\frac{h_3\ell_2}{M_2}+\frac{r_3\ell_1}{M_1}+\frac{1}{8}\bigg)
        +(-1)^{i_3}\bigg(\frac{h_4\ell_2}{M_2}+\frac{r_4\ell_1}{M_1}+\frac{1}{8}\bigg).
\end{align*}
 Therefore, we can write
\begin{equation}\label{R_0=S_1(x)+S_2(x)}
  \mathscr{R}_0^4=S_1(x)+S_2(x),
\end{equation}
where
\begin{align*}
   S_1(x)= & \frac{x}{32\pi^4}\sum_{(i_1,i_2,i_3)\in\{0,1\}^3}\sum_{\substack{n_1,n_2,n_3,n_4\leqslant y\\ \boldsymbol{\alpha}(\mathbf{n},\mathbf{i})=0 }}
   \frac{1}{(n_1n_2n_3n_4)^{3/4}}
           \sum_{\substack{n_j=h_jr_j\\j=1,2,3,4}}\cos\big(2\pi\boldsymbol{\beta}(\mathbf{h},\mathbf{r},\mathbf{i})\big),  \\
   S_2(x)=& \frac{x}{32\pi^4}\sum_{(i_1,i_2,i_3)\in\{0,1\}^3}\sum_{\substack{n_1,n_2,n_3,n_4\leqslant y\\ \boldsymbol{\alpha}(\mathbf{n},\mathbf{i})\not=0 }}
            \frac{1}{(n_1n_2n_3n_4)^{3/4}}  \\
    & \qquad\qquad\qquad\qquad\qquad\qquad\times \sum_{\substack{n_j=h_jr_j\\j=1,2,3,4}}
        \cos\big(4\pi\boldsymbol{\alpha}(\mathbf{n},\mathbf{i})\sqrt{x}-2\pi\boldsymbol{\beta}(\mathbf{h},\mathbf{r},\mathbf{i})\big).
\end{align*}
For $\mathbf{n}=(n_1,n_2,n_3,n_4)\in\mathbb{N}^4,\,\mathbf{i}=(i_1,i_2,i_3)\in\{0,1\}^3$, define
\begin{equation*}
    \tau(\mathbf{n},\mathbf{i}):=\sum_{\substack{\mathbf{n}\in\mathbb{N}^4\\ \boldsymbol{\alpha}(\mathbf{n},\mathbf{i})=0}}
      \frac{1}{(n_1n_2n_3n_4)^{3/4}}
           \sum_{\substack{n_j=h_jr_j\\j=1,2,3,4}}\cos\big(2\pi\boldsymbol{\beta}(\mathbf{h},\mathbf{r},\mathbf{i})\big)
\end{equation*}
and
\begin{equation*}
    \tau(\mathbf{n},\mathbf{i};y):=\sum_{\substack{n_1,n_2,n_3,n_4\leqslant y\\ \boldsymbol{\alpha}(\mathbf{n},\mathbf{i})=0}}
      \frac{1}{(n_1n_2n_3n_4)^{3/4}}
           \sum_{\substack{n_j=h_jr_j\\j=1,2,3,4}}\cos\big(2\pi\boldsymbol{\beta}(\mathbf{h},\mathbf{r},\mathbf{i})\big).
\end{equation*}
The condition $\boldsymbol{\alpha}(\mathbf{n},\mathbf{i})=0$ implies $\mathbf{i}\not=\mathbf{0}$, i.e. $(i_1,i_2,i_3)\not=(0,0,0)$. It is easy to see
that for $\mathbf{i}=(i_1,i_2,i_3)\in\{0,1\}^3$ and $\mathbf{i}'=(i_1',i_2',i_3')\in\{0,1\}^3$ satisfying $i_1+i_2+i_3=i_1'+i_2'+i_3'=v$, there must hold
\begin{equation*}
  \tau(\mathbf{n},\mathbf{i})= \tau(\mathbf{n},\mathbf{i}')=:s_{4;v},\qquad \tau(\mathbf{n},\mathbf{i};y)= \tau(\mathbf{n},\mathbf{i}';y)=:s_{4;v}(y),
\end{equation*}
say. Also, we write
\begin{equation}\label{C_4(l_1/M_1,l_2/M_2)-def}
  C_4\bigg(\frac{\ell_1}{M_1},\frac{\ell_2}{M_2}\bigg)=\sum_{v=1}^3{3\choose v}s_{4;v}.
\end{equation}
 Thus, from Lemma \ref{s_k;v(f)=s_k;v(f;y)+error} we deduce that
\begin{align*}
  B_4\bigg(\frac{\ell_1}{M_1},\frac{\ell_2}{M_2};y\bigg):=\sum_{\mathbf{i}\in\{0,1\}^3}\tau(\mathbf{n},\mathbf{i};y)
  =\sum_{v=1}^3{3\choose v}s_{4;v}(y)=C_4\bigg(\frac{\ell_1}{M_1},\frac{\ell_2}{M_2}\bigg)+O(y^{-1/2+\varepsilon}).
\end{align*}
Hence, we obtain
\begin{align}\label{S_1(x)-asymptotic}
       \int_{\frac{T}{2}}^T S_1(x)\mathrm{d}x
   = & \frac{1}{32\pi^4}B_4\bigg(\frac{\ell_1}{M_1},\frac{\ell_2}{M_2};y\bigg)\int_{\frac{T}{2}}^T x\mathrm{d}x
             \nonumber \\
   = &  \frac{1}{32\pi^4} C_4\bigg(\frac{\ell_1}{M_1},\frac{\ell_2}{M_2}\bigg) \int_{\frac{T}{2}}^T x\mathrm{d}x + O(T^2y^{-1/2+\varepsilon})
            \nonumber \\
   = &  \frac{1}{32\pi^4} C_4\bigg(\frac{\ell_1}{M_1},\frac{\ell_2}{M_2}\bigg) \int_{\frac{T}{2}}^T x\mathrm{d}x + O(T^{2-1/8+\varepsilon}).
\end{align}
 Now, we proceed to consider the contribution of $S_2(x)$. By Lemma \ref{yijie}, we get
\begin{equation}\label{S_2(x)-upper-all}
  \int_{\frac{T}{2}}^TS_2(x)\mathrm{d}x\ll\sum_{\mathbf{i}\in\{0,1\}^3}
       \sum_{\substack{n_1,n_2,n_3,n_4\leqslant y\\ \boldsymbol{\alpha}(\mathbf{n},\mathbf{i})\not=0}}
       \frac{d(n_1)d(n_2)d(n_3)d(n_4)}{(n_1n_2n_3n_4)^{3/4}}\min\bigg(T^2,\frac{T^{3/2}}{|\boldsymbol{\alpha}(\mathbf{n},\mathbf{i})|}\bigg).
\end{equation}
 For $\mathbf{i}=(i_1,i_2,i_3)\in\{0,1\}^3$, if $(i_1,i_2,i_3)=(0,0,0)$, then the contribution is
\begin{align}\label{S_2(x)-all-positive}
   \ll & \,\, T^{3/2}\sum_{n_1,n_2,n_3,n_4\leqslant y}\frac{d(n_1)d(n_2)d(n_3)d(n_4)}{(n_1n_2n_3n_4)^{3/4}(\sqrt{n_1}+\sqrt{n_2}+\sqrt{n_3}+\sqrt{n_4})}
                   \nonumber  \\
   \ll & \,\, T^{3/2+\varepsilon}\sum_{n_1\leqslant n_2\leqslant n_3\leqslant n_4\leqslant y}\frac{1}{(n_1n_2n_3n_4)^{3/4} n_4^{1/2}}
                    \nonumber  \\
   \ll & \,\, T^{3/2+\varepsilon}\sum_{n_1\leqslant n_2\leqslant n_3\leqslant n_4\leqslant y}\frac{1}{(n_1n_2n_3)^{3/4}n_4^{5/4}}
                     \nonumber  \\
   \ll & \,\,  T^{3/2+\varepsilon}y^{1/2}\ll T^{2-1/8+\varepsilon}.
\end{align}
If $(i_1,i_2,i_3)\not=(0,0,0)$, we write
\begin{equation}\label{not-all>0-Sigma1+Sigma2}
  \sum_{\mathbf{0}\not=\mathbf{i}\in\{0,1\}^3}
       \sum_{\substack{n_1,n_2,n_3,n_4\leqslant y\\ \boldsymbol{\alpha}(\mathbf{n},\mathbf{i})\not=0}}
       \frac{d(n_1)d(n_2)d(n_3)d(n_4)}{(n_1n_2n_3n_4)^{3/4}}\min\bigg(T^2,\frac{T^{3/2}}{|\boldsymbol{\alpha}(\mathbf{n},\mathbf{i})|}\bigg)
       =\Sigma_{(1)}+\Sigma_{(2)},
\end{equation}
where
\begin{equation*}
 \Sigma_{(1)}= \sum_{\substack{\mathbf{i}\in\{0,1\}^3\\ i_1+i_2+i_3=2}}
       \sum_{\substack{n_1,n_2,n_3,n_4\leqslant y\\ \boldsymbol{\alpha}(\mathbf{n},\mathbf{i})\not=0}}
       \frac{d(n_1)d(n_2)d(n_3)d(n_4)}{(n_1n_2n_3n_4)^{3/4}}\min\bigg(T^2,\frac{T^{3/2}}{|\boldsymbol{\alpha}(\mathbf{n},\mathbf{i})|}\bigg),
\end{equation*}
\begin{equation*}
 \Sigma_{(2)}= \sum_{\substack{\mathbf{i}\in\{0,1\}^3\\ i_1+i_2+i_3=1}}
       \sum_{\substack{n_1,n_2,n_3,n_4\leqslant y\\ \boldsymbol{\alpha}(\mathbf{n},\mathbf{i})\not=0}}
       \frac{d(n_1)d(n_2)d(n_3)d(n_4)}{(n_1n_2n_3n_4)^{3/4}}\min\bigg(T^2,\frac{T^{3/2}}{|\boldsymbol{\alpha}(\mathbf{n},\mathbf{i})|}\bigg).
\end{equation*}
Let
\begin{equation*}
   g=g(n_1,n_2,n_3,n_4):=\frac{d(n_1)d(n_2)d(n_3)d(n_4)}{(n_1n_2n_3n_4)^{3/4}}.
\end{equation*}
For $\Sigma_{(1)}$, by a splitting argument, there exist $1\leqslant N_1,N_2,N_3,N_4\leqslant y$ such that
\begin{equation}\label{Sigma_(1)-fenjie}
   \Sigma_{(1)}\ll T^\varepsilon \mathcal{G}(N_1,N_2,N_3,N_4)
\end{equation}
where
\begin{align*}
       \mathcal{G}(N_1,N_2,N_3,N_4)
   = & \sum_{\substack{\sqrt{n_1}+\sqrt{n_2}\not=\sqrt{n_3}+\sqrt{n_4}\\ n_i\sim N_i,1\leqslant i\leqslant 4\\1\leqslant N_1\leqslant N_2\leqslant y\\
           1\leqslant N_3\leqslant N_4\leqslant y}}g \cdot
       \min\bigg(T^2,\frac{T^{3/2}}{|\sqrt{n_1}+\sqrt{n_2}-\sqrt{n_3}-\sqrt{n_4}|}\bigg)
\end{align*}
If $N_2\geqslant300N_4$, then $|\sqrt{n_1}+\sqrt{n_2}-\sqrt{n_3}-\sqrt{n_4}|\gg N_2^{1/2}$, so the trivial estimate yields
\begin{equation*}
  \mathcal{G}(N_1,N_2,N_3,N_4)\ll \frac{T^{3/2+\varepsilon}N_1N_2N_3N_4}{(N_1N_2N_3N_4)^{3/4}N_2^{1/2}}\ll
      T^{3/2+\varepsilon}y^{1/2}\ll T^{2-1/8+\varepsilon}.
\end{equation*}
If $N_4\geqslant300N_2$, we can get the same estimate.  So later we always suppose that $N_2\asymp N_4$. Let $\eta=\sqrt{n_1}+\sqrt{n_2}-\sqrt{n_3}-\sqrt{n_4}$.
Write
\begin{equation}\label{G-fenjie}
 \mathcal{G}(N_1,N_2,N_3,N_4)=\mathcal{G}_1+\mathcal{G}_2+\mathcal{G}_3,
\end{equation}
where
\begin{eqnarray*}
   \mathcal{G}_1 & := & T^{2}\sum_{\substack{0<|\eta|\leqslant T^{-1/2}\\n_i\sim N_i,1\leqslant i\leqslant4
                                      \\1\leqslant N_1\leqslant N_2\leqslant y\\ 1\leqslant N_3\leqslant N_4\leqslant y}}g, \\
   \mathcal{G}_2 & := & T^{3/2}\sum_{\substack{ T^{-1/2}<|\eta|\leqslant 1 \\n_i\sim N_i,1\leqslant i\leqslant4
                                      \\1\leqslant N_1\leqslant N_2\leqslant y\\ 1\leqslant N_3\leqslant N_4\leqslant y   }}g|\eta|^{-1}, \\
\end{eqnarray*}
\begin{eqnarray*}
   \mathcal{G}_3 & := & T^{3/2}\sum_{\substack{|\eta|>1 \\n_i\sim N_i,1\leqslant i\leqslant4
                                      \\1\leqslant N_1\leqslant N_2\leqslant y\\ 1\leqslant N_3\leqslant N_4\leqslant y}}g|\eta|^{-1}.
\end{eqnarray*}
We estimate $\mathcal{G}_1$ first. By Lemma \ref{Zhai-lemma-5}, we get
\begin{eqnarray}\label{G_1-111}
   \mathcal{G}_1 & \ll & \frac{T^{2+\varepsilon}}{(N_1N_2N_3N_4)^{3/4}}\mathscr{A}_1\big(N_1,N_2,N_3,N_4;T^{-1/2}\big) \nonumber  \\
     & \ll & \frac{T^{2+\varepsilon}}{(N_1N_2N_3N_4)^{3/4}}\big(T^{-1/2}N_4^{1/2}N_1N_2N_3+N_1N_3N_4^{1/2}\big)    \nonumber  \\
     & \ll & T^{3/2+\varepsilon}(N_1N_3)^{1/4}+T^{2+\varepsilon}(N_1N_3)^{1/4}N_4^{-1}     \nonumber  \\
     & \ll & T^{3/2+\varepsilon}y^{1/2}+T^{2+\varepsilon}(N_1N_3)^{1/4}N_4^{-1}        \nonumber  \\
     & \ll & T^{2-1/8+\varepsilon}+T^{2+\varepsilon}(N_1N_3)^{1/4}N_4^{-1}.
\end{eqnarray}
On the other hand, by Lemma \ref{Zhai-lemma-3}, without loss of generality, we assume that $N_1\leqslant N_3\leqslant N_4$ and obtain
\begin{eqnarray}\label{G-1-222}
  \mathcal{G}_1 & \ll & \frac{T^{2+\varepsilon}}{(N_1N_2N_3N_4)^{3/4}}\mathscr{A}_{-}(N_1,N_2,N_3,N_4;T^{-1/2})   \nonumber  \\
  & \ll & \frac{T^{2+\varepsilon}}{(N_1N_2N_3N_4)^{3/4}} \big(T^{-1/8}N_1^{7/8}+N_1^{1/2}\big)
          \big(T^{-1/8}N_3^{7/8}+N_3^{1/2}\big)\big(T^{-1/4}N_4^{7/4}+N_4\big)      \nonumber  \\
  & \ll & T^{2+\varepsilon}(N_1N_3)^{-1/4}N_4^{-1/2}\big(T^{-1/8}N_1^{3/8}+1\big)\big(T^{-1/8}N_3^{3/8}+1\big)\big(T^{-1/4}N_4^{3/4}+1\big) \nonumber  \\
  & \ll & T^{2+\varepsilon}(N_1N_3)^{-1/4}N_4^{-1/2}
          \big(T^{-1/4}(N_1N_3)^{3/8}+T^{-1/8}N_3^{3/8}+1\big) \big(T^{-1/4}N_4^{3/4}+1\big)    \nonumber  \\
  & \ll & T^{7/4+\varepsilon}(N_1N_3)^{1/8}N_4^{-1/2}    \nonumber  \\
  &     & +T^{2+\varepsilon}(N_1N_3)^{-1/4}N_4^{-1/2} \big(T^{-1/8}N_3^{3/8}+1\big) \big(T^{-1/4}N_4^{3/4}+1\big)  \nonumber  \\
  & \ll & T^{7/4+\varepsilon}N_4^{-1/4}+T^{2+\varepsilon}(N_1N_3)^{-1/4}N_4^{-1/2}\big(T^{-3/8}N_4^{9/8}+1\big)    \nonumber  \\
  & \ll & T^{7/4+\varepsilon}+T^{2+\varepsilon}(N_1K_3)^{-1/4}N_4^{-1/2}\big(T^{-3/8}N_4^{9/8}+1\big).
\end{eqnarray}
From (\ref{G_1-111}) and (\ref{G-1-222}), we get
\begin{equation*}
   \mathcal{G}_1  \ll  T^{2-1/8+\varepsilon}+T^{2+\varepsilon}\cdot
   \min\Bigg(\frac{(N_1N_3)^{1/4}}{N_4},\frac{T^{-3/8}N_4^{9/8}+1}{(N_1N_3)^{1/4}N_4^{1/2}}\Bigg).
\end{equation*}
\textbf{Case 1} If $N_4\gg T^{1/3}$, then $T^{-3/8}N_4^{9/8}\gg1,$ and thus
\begin{eqnarray}\label{G_1-case-1}
  \mathcal{G}_1 & \ll & T^{2-1/8+\varepsilon}+T^{2+\varepsilon}\cdot
   \min\Bigg(\frac{(N_1N_3)^{1/4}}{N_4},\frac{T^{-3/8}N_4^{9/8}}{(N_1N_3)^{1/4}N_4^{1/2}}\bigg) \nonumber \\
                         & \ll & T^{2-1/8+\varepsilon}+T^{2+\varepsilon}\bigg(\frac{(N_1N_3)^{1/4}}{N_4}\bigg)^{1/2}
          \bigg(\frac{T^{-3/8}N_4^{9/8}}{(N_1N_3)^{1/4}N_4^{1/2}}\bigg)^{1/2}   \nonumber \\
    & \ll & T^{2-1/8+\varepsilon}+T^{2-3/16+\varepsilon}N_4^{-3/16}\ll T^{2-1/8+\varepsilon}.
\end{eqnarray}
\textbf{Case 2} If $N_4\ll T^{1/3}$, then $T^{-3/8}N_4^{9/8}\ll1$.
By noting  Lemma \ref{kong-lamma} and
\begin{equation*}
N_2\asymp N_4\asymp\max(N_1,N_2,N_3,N_4),
\end{equation*}
we have
\begin{equation*}
 T^{-1/2}\gg|\eta|\gg(n_1n_2n_3n_4)^{-1/2}\max(n_1,n_2,n_3,n_4)^{-3/2}\asymp (N_1N_3)^{-1/2}N_4^{-5/2}.
\end{equation*}
Hence, we obtain
\begin{eqnarray}\label{G_1-case-2}
   \mathcal{G}_1 & \ll & T^{2-1/8+\varepsilon}+T^{2+\varepsilon}\min\Bigg(\frac{(N_1N_3)^{1/4}}{N_4},\frac{1}{(N_1N_3)^{1/4}N_4^{1/2}}\Bigg)
                           \nonumber \\
   & \ll & T^{2-1/8+\varepsilon}+T^{2+\varepsilon}\Bigg(\frac{(N_1N_3)^{1/4}}{N_4}\Bigg)^{1/4}\Bigg(\frac{1}{(N_1N_3)^{1/4}N_4^{1/2}}\Bigg)^{3/4}
                            \nonumber \\
   & = & T^{2-1/8+\varepsilon}+T^{2+\varepsilon}(N_1N_3)^{-1/8}N_4^{-5/8}
                            \nonumber \\
   & \ll & T^{2-1/8+\varepsilon}+T^{2+\varepsilon}(T^{-1/2})^{1/4}\ll T^{2-1/8+\varepsilon}.
\end{eqnarray}
Combining (\ref{G_1-case-1}) and (\ref{G_1-case-2}), we get
\begin{equation}\label{G-1-estimate}
   \mathcal{G}_1\ll T^{2-1/8+\varepsilon}.
\end{equation}

 Now, we estimate $\mathcal{G}_2$. By a splitting argument, we get that there exists some $\delta$ satisfying $T^{-1/2}\ll \delta\ll 1$ such that
\begin{equation*}
  \mathcal{G}_2\ll\frac{T^{3/2+\varepsilon}}{(N_1N_2N_3N_4)^{3/4}\delta}\times\sum_{\substack{\delta<|\eta|\leqslant2\delta\\ \eta\not=0}}1.
\end{equation*}
By Lemma \ref{Zhai-lemma-5}, we get
\begin{eqnarray}\label{G-2-111}
  \mathcal{G}_2 & \ll & \frac{T^{3/2+\varepsilon}}{(N_1N_2N_3N_4)^{3/4}\delta}\mathscr{A}_1(N_1,N_2,N_3,N_4;2\delta)
                               \nonumber \\
     & \ll & \frac{T^{3/2+\varepsilon}}{(N_1N_2N_3N_4)^{3/4}\delta} (\delta N_4^{1/2}N_1N_2N_3+N_1N_3N_4^{1/2})
                                \nonumber \\
     & = &   T^{3/2+\varepsilon}(N_1N_3)^{1/4}+T^{3/2+\varepsilon}\delta^{-1}(N_1N_3)^{1/4}N_4^{-1}
                                \nonumber \\
     & \ll &  T^{3/2+\varepsilon}y^{1/2}+T^{3/2+\varepsilon}\delta^{-1}(N_1N_3)^{1/4}N_4^{-1}
                                \nonumber \\
     & \ll &  T^{2-1/8+\varepsilon}+T^{3/2+\varepsilon}\delta^{-1}(N_1N_3)^{1/4}N_4^{-1} .
\end{eqnarray}
On the other hand, by Lemma \ref{Zhai-lemma-3}, without loss of generality, we assume that $N_1\leqslant N_3\leqslant N_4$ and obtain
\begin{eqnarray*}
  \mathcal{G}_2 & \ll & \frac{T^{3/2+\varepsilon}}{(N_1N_2N_3N_4)^{3/4}\delta}\times\mathscr{A}_{-}(N_1,N_2,N_3,N_4;2\delta)
                           \nonumber   \\
   & \ll & \frac{T^{3/2+\varepsilon}}{(N_1N_2N_3N_4)^{3/4}\delta} \big(\delta^{1/4}N_1^{7/8}+N_1^{1/2}\big)
           \big(\delta^{1/4}N_3^{7/8}+N_3^{1/2}\big)\big(\delta^{1/2}N_4^{7/4}+N_4\big)
                            \nonumber   \\
 & \ll & T^{3/2+\varepsilon}(N_1N_3)^{-1/4}N_4^{-1/2}\delta^{-1}\big(\delta^{1/4}N_1^{3/8}+1\big)
           \big(\delta^{1/4}N_3^{3/8}+1\big)\big(\delta^{1/2}N_4^{3/4}+1\big)
\end{eqnarray*}
\begin{eqnarray}\label{G-2-222}
   & \ll &  T^{3/2+\varepsilon}(N_1N_3)^{-1/4}N_4^{-1/2}\delta^{-1}
            \big(\delta^{1/2}(N_1N_3)^{3/8}+\delta^{1/4}N_3^{3/8}+1\big) \big(\delta^{1/2}N_4^{3/4}+1\big)
                           \nonumber   \\
   & \ll & T^{3/2+\varepsilon}(N_1N_3)^{1/8}N_4^{-1/2}\delta^{-1/2}
                           \nonumber   \\
   &     &  +T^{3/2+\varepsilon}(N_1N_3)^{-1/4}N_4^{-1/2}\delta^{-1}\big(\delta^{1/4}N_3^{3/8}+1\big) \big(\delta^{1/2}N_4^{3/4}+1\big)
                            \nonumber   \\
   & \ll & T^{7/4+\varepsilon}N_4^{-1/4}+T^{3/2+\varepsilon}(N_1N_3)^{-1/4}N_4^{-1/2}\delta^{-1}\big(\delta^{3/4}N_4^{9/8}+1\big).
\end{eqnarray}
From (\ref{G-2-111}) and (\ref{G-2-222}), we get
\begin{equation*}
 \mathcal{G}_2  \ll  T^{2-1/8+\varepsilon}+T^{3/2+\varepsilon}\delta^{-1}
                         \cdot\min\Bigg(\frac{(N_1N_3)^{1/4}}{N_4},\frac{\delta^{3/4}N_4^{9/8}+1}{(N_1N_3)^{1/4}N_4^{1/2}}\Bigg).
\end{equation*}
\textbf{Case 1} If $\delta\gg N_4^{-3/2}$, then $\delta^{3/4}N_4^{9/8}\gg1$, we get (recall $\delta\gg T^{-1/2}$)
\begin{eqnarray}\label{G_2-case-1}
   \mathcal{G}_2 & \ll & T^{2-1/8+\varepsilon}+T^{3/2+\varepsilon}\delta^{-1}
                         \cdot\min\Bigg(\frac{(N_1N_3)^{1/4}}{N_4},\frac{\delta^{3/4}N_4^{9/8}}{(N_1N_3)^{1/4}N_4^{1/2}}\Bigg)
                          \nonumber   \\
              & \ll &  T^{2-1/8+\varepsilon}+ T^{3/2+\varepsilon}\delta^{-1}
                      \Bigg(\frac{(N_1N_3)^{1/4}}{N_4}\Bigg)^{1/2}   \Bigg(\frac{\delta^{3/4}N_4^{9/8}}{(N_1N_3)^{1/4}N_4^{1/2}}\Bigg)^{1/2}
                         \nonumber   \\
              & \ll & T^{2-1/8+\varepsilon} +T^{3/2+\varepsilon}\delta^{-5/8}N_4^{-3/16}
                         \nonumber   \\
              & \ll & T^{2-1/8+\varepsilon}+T^{3/2+\varepsilon}T^{5/16}N_4^{-3/16}\ll T^{2-1/8+\varepsilon}.
\end{eqnarray}
\textbf{Case 2} If $\delta\ll N_4^{-3/2}$, then $\delta^{3/4}N_4^{9/8}\ll1$. By Lemma \ref{kong-lamma}, we have
\begin{equation*}
   \delta\gg|\eta|\gg(n_1n_2n_3n_4)^{-1/2}\max(n_1,n_2,n_3,n_4)^{-3/2}\asymp (N_1N_3)^{-1/2}N_4^{-5/2}.
\end{equation*}
 Therefore, we obtain (recall $\delta\gg T^{-1/2}$)
\begin{eqnarray} \label{G_2-case-2}
  \mathcal{G}_2 & \ll & T^{2-1/8+\varepsilon}+T^{3/2+\varepsilon}\delta^{-1}\cdot
                         \min\Bigg(\frac{(N_1N_3)^{1/4}}{N_4},\frac{1}{(N_1N_3)^{1/4}N_4^{1/2}}\Bigg)
                              \nonumber   \\
    & \ll & T^{2-1/8+\varepsilon}+T^{3/2+\varepsilon}\delta^{-1}\Bigg(\frac{(N_1N_3)^{1/4}}{N_4}\Bigg)^{1/4}
             \Bigg(\frac{1}{(N_1N_3)^{1/4}N_4^{1/2}}\Bigg)^{3/4}
                              \nonumber   \\
    & \ll & T^{2-1/8+\varepsilon}+T^{3/2+\varepsilon}\delta^{-1}(N_1N_3)^{-1/8}N_4^{-5/8}
                             \nonumber   \\
     & \ll & T^{2-1/8+\varepsilon}+T^{3/2+\varepsilon}\delta^{-1}\delta^{1/4}
                               \nonumber   \\
     & \ll & T^{2-1/8+\varepsilon}+T^{3/2+\varepsilon}T^{3/8}\ll  T^{2-1/8+\varepsilon}.
\end{eqnarray}
Combining (\ref{G_2-case-1}) and (\ref{G_2-case-2}), we get
\begin{equation}\label{G-2-estimate}
   \mathcal{G}_2\ll T^{2-1/8+\varepsilon}.
\end{equation}
For $\mathcal{G}_3$, by a splitting argument and Lemma \ref{Zhai-lemma-5} again, we get
\begin{eqnarray*}
   \mathcal{G}_3 & \ll & \frac{T^{3/2+\varepsilon}}{(N_1N_2N_3N_4)^{3/4}\delta}\times
                         \sum_{\substack{\delta<|\eta|\leqslant2\delta\\ \delta\gg 1}}1
\end{eqnarray*}
\begin{eqnarray} \label{G-3-estimate}
    & \ll & \frac{T^{3/2+\varepsilon}}{(N_1N_2N_3N_4)^{3/4}\delta}\cdot \delta N_4^{1/2}N_1N_2N_3\ll  T^{3/2+\varepsilon}(N_1N_3)^{1/4}  \nonumber \\
    & \ll &  T^{3/2+\varepsilon}y^{1/2}\ll T^{2-1/8+\varepsilon}.
\end{eqnarray}
Comnining (\ref{Sigma_(1)-fenjie}), (\ref{G-fenjie}), (\ref{G-1-estimate}), (\ref{G-2-estimate}) and (\ref{G-3-estimate}), we deduce that
\begin{equation}\label{Sigma_{(1)}-upper}
   \Sigma_{(1)}\ll T^{2-1/8+\varepsilon}.
\end{equation}
In the same way, by using Lemma \ref{Zhai-lemma-6} instead of Lemma \ref{Zhai-lemma-5}, we can follow the above arguments step by step and obtain
\begin{equation}\label{Sigma_{(2)}-upper}
   \Sigma_{(2)}\ll T^{2-1/8+\varepsilon}.
\end{equation}
It follows from (\ref{S_2(x)-upper-all}), (\ref{S_2(x)-all-positive}), (\ref{not-all>0-Sigma1+Sigma2}),
(\ref{Sigma_{(1)}-upper}) and (\ref{Sigma_{(2)}-upper}) that
\begin{equation}\label{S_2(x)-upper-last}
   \int_\frac{T}{2}^TS_2(x)\mathrm{d}x\ll T^{2-1/8+\varepsilon}.
\end{equation}
According to (\ref{R_0=S_1(x)+S_2(x)}), (\ref{S_1(x)-asymptotic}) and (\ref{S_2(x)-upper-last}), we conclude that
\begin{equation}\label{R_0-fourth-power-mean}
  \int_{\frac{T}{2}}^T\mathscr{R}_0^4\mathrm{d}x=\frac{1}{32\pi^4}C_4\bigg(\frac{\ell_1}{M_1},\frac{\ell_2}{M_2}\bigg)
  \int_{\frac{T}{2}}^Tx\mathrm{d}x+O(T^{2-1/8+\varepsilon}).
\end{equation}
From (4.11) of Liu \cite{Liu-Kui}, we know that
\begin{equation}\label{R_1+R_2-mean-square}
  \int_{\frac{T}{2}}^T|\mathscr{R}_1+\mathscr{R}_2|^2\mathrm{d}x\ll T^{1/2}y^{-1/2}\mathscr{L}^3.
\end{equation}
Also, by (4.14) of Liu \cite{Liu-Kui}, we have
\begin{equation}\label{R_1+R_2-mean-8th-power}
  \int_{\frac{T}{2}}^T|\mathscr{R}_1+\mathscr{R}_2|^8\mathrm{d}x\ll T^{3+\varepsilon}.
\end{equation}
It follows from (\ref{R_1+R_2-mean-square}) and (\ref{R_1+R_2-mean-8th-power}) and Cauchy's inequality that
\begin{align}\label{R_1+R_2-mean-4th-power}
   \int_{\frac{T}{2}}^T|\mathscr{R}_1+\mathscr{R}_2|^4\mathrm{d}x
   = &  \int_{\frac{T}{2}}^T|\mathscr{R}_1+\mathscr{R}_2|^{4/3}|\mathscr{R}_1+\mathscr{R}_2|^{8/3}\mathrm{d}x
                       \nonumber \\
   \ll &  \bigg(\int_{\frac{T}{2}}^T|\mathscr{R}_1+\mathscr{R}_2|^2\mathrm{d}x\bigg)^{2/3}
          \bigg(\int_{\frac{T}{2}}^T|\mathscr{R}_1+\mathscr{R}_2|^8\mathrm{d}x\bigg)^{1/3}
                        \nonumber \\
   \ll & (T^{1/2}y^{-1/2}\mathscr{L}^3)^{2/3}(T^{3+\varepsilon})^{1/3}
                        \nonumber \\
   \ll &  T^{4/3+\varepsilon}y^{-\frac{1}{3}}\ll T^{13/12+\varepsilon}.
\end{align}
By (\ref{R_0-fourth-power-mean}), (\ref{R_1+R_2-mean-4th-power}) and H\"{o}lder's inequality, we deduce that
\begin{align}\label{R_^3|R_1+R_2|-mean-value}
  \int_{\frac{T}{2}}^T|\mathscr{R}_0|^3|\mathscr{R}_1+\mathscr{R}_2|\mathrm{d}x
  \ll & \bigg(\int_{\frac{T}{2}}^T\mathscr{R}_0^4\mathrm{d}x\bigg)^{3/4}\bigg(\int_{\frac{T}{2}}^T|\mathscr{R}_1+\mathscr{R}_2|^4\mathrm{d}x\bigg)^{1/4}
                    \nonumber \\
  \ll & (T^2)^{3/4}(T^{13/12+\varepsilon})^{1/4}\ll T^{85/48+\varepsilon}\ll T^{2-1/8+\varepsilon}.
\end{align}
Combining (\ref{R_4-mean-explicit}), (\ref{R_0-fourth-power-mean}), (\ref{R_1+R_2-mean-4th-power}) and (\ref{R_^3|R_1+R_2|-mean-value}), we obtain
\begin{equation}\label{R^4-asymp-conclusion}
  \int_{\frac{T}{2}}^T\mathscr{R}^4\mathrm{d}x=\frac{1}{32\pi^4}C_4\bigg(\frac{\ell_1}{M_1},\frac{\ell_2}{M_2}\bigg)
  \int_{\frac{T}{2}}^Tx\mathrm{d}x+O(T^{2-1/8+\varepsilon}).
\end{equation}
This completes the proof of Proposition \ref{proposition-1}.
\end{proof}

\begin{proposition}\label{proposition-2}
   For $H\gg T^\varepsilon$, we have
\begin{equation*}
\int_{\frac{T}{2}}^T\mathscr{G}^4\mathrm{d}x\ll T^{3/2}H^{-1/4},
\end{equation*}
\begin{equation*}
\int_{\frac{T}{2}}^T|\mathscr{R}|^3|\mathscr{G}|\mathrm{d}x\ll T^{2-1/8+\varepsilon},
\end{equation*}
\begin{equation*}
\int_{\frac{T}{2}}^T|\mathscr{R}|^3\mathscr{L}^3\mathrm{d}x\ll T^{2-1/8+\varepsilon}.
\end{equation*}
\end{proposition}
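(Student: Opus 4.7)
The plan is to handle the three estimates of Proposition~\ref{proposition-2} in order of technical difficulty, with Parts~(2) and~(3) deducible from Part~(1) and the fourth-power bound $\int_{T/2}^T \mathscr{R}^4 dx \ll T^2$ of Proposition~\ref{proposition-1} by Hölder's inequality. For Part~(3), Hölder gives
\begin{equation*}
 \int_{T/2}^T |\mathscr{R}|^3 dx \ll T^{1/4}\bigg(\int_{T/2}^T \mathscr{R}^4 dx\bigg)^{3/4} \ll T^{7/4},
\end{equation*}
which multiplied by $\mathscr{L}^3 \ll T^\varepsilon$ yields $T^{7/4+\varepsilon}$, well within the target. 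For Part~(2), Hölder with exponents $(4/3,4)$ combined with Part~(1) and the choice $H=T^8$ made at the beginning of this section gives
\begin{equation*}
 \int_{T/2}^T |\mathscr{R}|^3|\mathscr{G}| dx \ll (T^2)^{3/4} (T^{3/2} H^{-1/4})^{1/4} = T^{11/8},
\end{equation*}
again admissible.

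The substantive work is therefore Part~(1). By $(a+b)^4 \ll a^4+b^4$ and the symmetry between $G_{12}$ and $G_{21}$, one reduces to estimating $\int_{T/2}^T G_{12}^4 dx$. From (\ref{G_12(x;H)-def}), up to constants
\begin{equation*}
 G_{12}(x;H) \ll \sum_{\substack{n \leq \sqrt{M_1M_2 x}\\ n\equiv \ell_1(\bmod M_1)}} \min\bigg(1,\frac{1}{H\|M_1 x/n - \ell_2/M_2\|}\bigg),
\end{equation*}
a sum of about $N\asymp \sqrt{T}$ non-negative terms. The one-variable input, via the substitution $u = M_1 x/n - \ell_2/M_2$ and the standard one-period bound $\int_0^1 \min(1,1/(H\|u\|))^4 du \ll H^{-1}$, gives $\int_{T/2}^T \min(1,1/(H\|M_1 x/n - \ell_2/M_2\|))^4 dx \ll T/H$ uniformly in $n$. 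Raising the inner sum to the fourth power via the power-mean inequality $(\sum_n a_n)^4 \leq N^3 \sum_n a_n^4$ and integrating yields $\int_{T/2}^T G_{12}^4 dx \ll N^4 \cdot T/H \ll T^3/H$, which is sharper than $T^{3/2}H^{-1/4}$ as soon as $H \gg T^2$.

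The main obstacle is that for $T^\varepsilon \ll H \ll T^2$ the crude power-mean argument is too lossy, and one has to exploit the oscillation of the summands. I would do this by Fourier expanding the sawtooth-type function $\min(1,1/(H\|u\|))$ with coefficients bounded by $\min(1/H,1/|h|,H/h^2)$, turning $\int G_{12}^4 dx$ into a combination of quadruple sums of integrals of the form $\int_{T/2}^T e(M_1 x(h_1/n_1 \pm h_2/n_2 \pm h_3/n_3 \pm h_4/n_4))dx$. The diagonal contribution is handled by Parseval and the off-diagonal by the first-derivative estimate (Lemma~\ref{Tsang-lemma-1}), yielding $T^{3/2}H^{-1/4}$ across the whole stated range. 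In the intended application $H=T^8$ the trivial bound already suffices, so this finer analysis is needed only to make the proposition hold for the full stated range of $H$.
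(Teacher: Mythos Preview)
Your derivations of the second and third estimates via H\"older's inequality match the paper exactly and are correct.

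For the first estimate the paper takes a shorter route than you do. It does \emph{not} use the power-mean inequality; instead it asserts directly
\[
\int_{T/2}^{T}G_{12}(x;H)^{4}\,\mathrm{d}x \;\ll\; \sum_{n_1}\int_{T/2}^{T}\min\!\Bigl(1,\,\frac{1}{H\,\|M_1x/n_1-\ell_2/M_2\|^{4}}\Bigr)\,\mathrm{d}x,
\]
evaluates the one-variable integral $\int_{0}^{1/2}\min(1,(Hu^{4})^{-1})\,\mathrm{d}u\ll H^{-1/4}$ via the split at $u=H^{-1/4}$, and sums over the $O(T^{1/2})$ admissible $n_1$ to reach $T^{3/2}H^{-1/4}$. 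No justification is given for the displayed inequality, and it fails pointwise whenever several summands $a_{n_1}=\min(1,(H\|M_1x/n_1-\ell_2/M_2\|)^{-1})$ are simultaneously close to~$1$: the left side can be of order $N^{4}$ while the right side is at most~$N$. Your power-mean step $(\sum_n a_n)^{4}\leqslant N^{3}\sum_n a_n^{4}$ is the honest replacement; it produces $\int G_{12}^{4}\,\mathrm{d}x\ll T^{3}/H$ rather than $T^{3/2}H^{-1/4}$, and for the choice $H=T^{8}$ actually made at the start of Section~5 this is already negligible.

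A caution about your Fourier-expansion sketch for the full range $H\gg T^{\varepsilon}$: the only control on $\mathscr{G}$ is through the nonnegative majorant $\sum_n a_n$, and the diagonal of $\int(\sum_n a_n)^{2}\,\mathrm{d}x$ is already $\gg T^{3/2}/H$, so Cauchy--Schwarz forces $\int(\sum_n a_n)^{4}\,\mathrm{d}x\gg T^{2}/H^{2}$, which exceeds $T^{3/2}H^{-1/4}$ once $H\ll T^{2/7}$. Hence neither your outline nor the paper's displayed step can deliver the stated bound across the entire range $H\gg T^{\varepsilon}$. This does not affect Theorem~\ref{theorem-1}, where only $H=T^{8}$ is used and your $T^{3}/H$ bound already suffices.
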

\begin{proof}
 Recall that
\begin{equation*}
\mathscr{G}=G_{12}(x;H)+G_{21}(x;H).
\end{equation*}
From (\ref{G_12(x;H)-def}), we deduce that
\begin{align}\label{G_12(x;H)-fourth-power}
          \int_{\frac{T}{2}}^T\big(G_{12}(x;H)\big)^4\mathrm{d}x
  \ll & \sum_{\substack{n_1\leqslant\sqrt{M_1M_2T}\\ n_1\equiv \ell_1(\!\bmod M_1)}}
        \int_{\frac{T}{2}}^T\min\Bigg(1,\frac{1}{H\big\|\frac{M_1x}{n_1}-\frac{\ell_2}{M_2}\big\|^4}\Bigg)\mathrm{d}x
                 \nonumber \\
  \ll & \sum_{\substack{n_1\leqslant\sqrt{M_1M_2T}\\ n_1\equiv \ell_1(\!\bmod M_1)}}\frac{n_1}{M_1}
  \int_{\frac{M_1T}{2n_1}-\frac{\ell_2}{M_2}}^{\frac{M_1T}{n_1}-\frac{\ell_2}{M_2}}\min\bigg(1,\frac{1}{H\|u\|^4}\bigg)\mathrm{d}u
                  \nonumber \\
  \ll & \,\,T\sum_{\substack{n_1\leqslant\sqrt{M_1M_2T}\\ n_1\equiv \ell_1(\!\bmod M_1)}}
        \int_0^1\min\bigg(1,\frac{1}{H\|U\|^4}\bigg)\mathrm{d}u
                  \nonumber \\
   \ll & \,\,T\sum_{\substack{n_1\leqslant\sqrt{M_1M_2T}\\ n_1\equiv \ell_1(\!\bmod M_1)}}
         \bigg\{\int_0^{\frac{1}{2}}+\int_{\frac{1}{2}}^1\bigg\}\min\bigg(1,\frac{1}{H\|U\|^4}\bigg)\mathrm{d}u
                   \nonumber \\
   \ll & \,\, T\sum_{\substack{n_1\leqslant\sqrt{M_1M_2T}\\ n_1\equiv \ell_1(\!\bmod M_1)}}
               \int_0^{\frac{1}{2}}\min\bigg(1,\frac{1}{Hu^4}\bigg)\mathrm{d}u
                    \nonumber \\
   \ll & \,\, T\sum_{\substack{n_1\leqslant\sqrt{M_1M_2T}\\ n_1\equiv \ell_1(\!\bmod M_1)}}
              \bigg(\int_0^{H^{-\frac{1}{4}}}1\mathrm{d}u+\int_{H^{-\frac{1}{4}}}^{\frac{1}{2}}\frac{1}{Hu^4}\mathrm{d}u\bigg)
                    \nonumber \\
   \ll & \,\, T^{3/2}H^{-1/4}.
\end{align}
 Similarly, we also have
\begin{equation}\label{G_21(x;H)-fourth-power}
     \int_{\frac{T}{2}}^T\big(G_{21}(x;H)\big)^4\mathrm{d}x   \ll T^{3/2}H^{-1/4}.
\end{equation}
Therefore, from (\ref{G_12(x;H)-fourth-power}) and (\ref{G_21(x;H)-fourth-power}), we derive that
\begin{equation}\label{G-fourth-power}
     \int_{\frac{T}{2}}^T\mathscr{G}^4\mathrm{d}x   \ll T^{3/2}H^{-1/4}.
\end{equation}
 It follows from (\ref{R^4-asymp-conclusion}), (\ref{G-fourth-power}) and Holder's inequality that
\begin{align}
        \int_{\frac{T}{2}}^T|\mathscr{R}|^3|\mathscr{G}|\mathrm{d}x
  \ll & \,\,\bigg( \int_{\frac{T}{2}}^T|\mathscr{R}|^4\mathrm{d}x\bigg)^{3/4}\bigg(\int_{\frac{T}{2}}^T|\mathscr{G}|^4\mathrm{d}x\bigg)^{1/4}
                \nonumber \\
  \ll &  \,\,T^{15/8}H^{-1/16}\ll T^{2-1/8+\varepsilon},
\end{align}
and
\begin{equation}
   \int_{\frac{T}{2}}^T|\mathscr{R}|^3\mathscr{L}^3\mathrm{d}x
    \ll \bigg( \int_{\frac{T}{2}}^T\mathscr{R}^4\mathrm{d}x\bigg)^{3/4}\bigg(\int_{\frac{T}{2}}^T \mathscr{L}^{12}\mathrm{d}x\bigg)^{1/4}
    \ll T^{7/4}\mathscr{L}^3\ll T^{2-1/8+\varepsilon}.
\end{equation}
This completes the proof of Proposition \ref{proposition-2}.
\end{proof}

From  (\ref{Delta(M_1M_2x)-fenjie}), Proposition \ref{proposition-1} and  Proposition \ref{proposition-2}, we get the fourth--power moment of
$\Delta(M_1M_2x;\ell_1,M_1,\ell_2,M_2)$.

\section*{Acknowledgement}

   The authors would like to express the most sincere gratitude to Professor Wenguang Zhai for his valuable
advice and constant encouragement.

\end{document}